\newtheorem{thm}{Theorem}[section]
\newtheorem{rmk}[thm]{Remark}
\newtheorem{emp}[thm]{Example}
\newtheorem{lem}[thm]{Lemma}
\newtheorem{defn}[thm]{Definition}
\newtheorem{prop}[thm]{Proposition}
\newtheorem{cor}[thm]{Corollary}
\newtheorem{step}{Step}
\begin{document}
\title{A note on cohomological boundedness for $F$-divided sheaves and $\mathcal{D}$-modules}
\author{Xiaodong Yi\footnote{yixd97@outlook.com}}
\date{}
\maketitle
\begin{abstract}
Let $X$ be a smooth proper scheme over an algebraically closed field $k$ in characteristic $p$. In this short note, by interpreting $\mathcal{D}_{X}$-modules as $F$-divided sheaves and establishing a cohomological boundedness property for $F$-divided sheaves, we prove that any $\mathcal{O}_{X}$-coherent $\mathcal{D}_{X}$-module has finite dimensional
$\mathcal{D}_{X}$-module cohomology.  
\end{abstract}
\noindent \textbf{Mathematics Subject Classification (MSC 2020):} 14F10, 14F20.\par

\tableofcontents

\section{\normalsize Introduction}

Let $X$ be a smooth scheme over an algebraically closed field $k$. The sheaf $\mathcal{D}_{X}$ of differential operators is introduced in \cite{PMIHES_1967__32__5_0}. In characteristic $0$, the sheaf $\mathcal{D}_{X}$ is generated as a sheaf of rings over $\mathcal{O}_{X}$ by operators of order $\leq 1$ and the structure as a $\mathcal{D}_{X}$-module is equivalent to the structure as a flat connection.  In characteristic $p$ this picture is not true any more. There are   (infinitely many) operators of higher order from $\mathcal{D}_{X}$ whose actions can not be recovered from the connection map. Nevertheless, such complexity essures some good properties of $\mathcal{D}_{X}$-modules instead of flat connections. For example, an $\mathcal{O}_{X}$-coherent $\mathcal{D}_{X}$-module is automatically locally free as an $\mathcal{O}_{X}$-module.  \par
This article is devoted to the study of $\mathcal{D}_{X}$-module cohomology which we define as follows. By a $\mathcal{D}_{X}$-module we mean a quasi-coherent sheaf on $X$ together with an action of $\mathcal{D}_{X}$ from the left. The category of $D_{X}$-module is denoted by $\mathcal{D}_{X}$-$\mathrm{mod}$. The structural sheaf $\mathcal{O}_{X}$ can be endowed  with a $\mathcal{D}_{X}$-module structure naturally. The category of $\mathcal{D}_{X}$-module has enough injectives and it makes sense to define  the $i$-th $\mathcal{D}_{X}$-module cohomology $H_{\mathcal{D}_{X}}^{i}$ as the $i$-th right derived functor of the following left exact functor:
\[H_{\mathcal{D}_{X}}^{0}:\mathcal{D}_{X}-\mathrm{mod} \rightarrow \mathrm{Vect}_{k},\mathcal{E}\mapsto \mathrm{Hom}_{\mathcal{D}_{X}}(\mathcal{O}_{X},\mathcal{E}),\]
where $\mathrm{Vect}_{k}$ is the category of all $k$-vector spaces, not necessarily of finite dimension. \par
Though we will not use infinitesimal crystals in the sequel, the $\mathcal{D}_{X}$-module cohomology is firstly introduced by Grothendieck in \cite{grothendieck1968crystals} from the point of view of infinitesimal crystals. In particular he proves a comparison theorem with de Rham cohomology in characteristic $0$. In characteristic $p$ this cohomology theory is not so well-behaved in the sense that it is not a Weil cohomology. To understand what the $\mathcal{D}$-module cohomology really is,  Ogus studies it in greater details in \cite{ASENS_1975_4_8_3_295_0}. In particular the cohomology of $\mathcal{O}_{X}$ has been studied in comparison with the crystalline cohomology with the same coefficient. \par
 The main result is to prove the finiteness of such cohomology assuming further $X$ is proper over $k$ and $\mathcal{E}$ is $\mathcal{O}_{X}$-coherent: 
\begin{thm}[Theorem \ref{d_mod}]
\label{intro_d_mod}
Let $X$ be a smooth proper scheme over an algebraically closed field of characteristic $p$. For any $\mathcal{O}_{X}$-coherent $\mathcal{D}_{X}$-module $\mathcal{E}$ and any cohomology degree $i$, the $\mathcal{D}_{X}$-module cohomology $H^{i}_{\mathcal{D}_{X}}(X,\mathcal{E})$ is finite dimensional as a $k$-vector space. 
\end{thm}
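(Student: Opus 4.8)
The plan is to reduce the computation of $H^i_{\mathcal{D}_X}(X,\mathcal{E})$ to ordinary coherent cohomology on $X$ via the Frobenius-descent description of $\mathcal{D}_X$-modules. First I would use the equivalence between $\mathcal{O}_X$-coherent $\mathcal{D}_X$-modules and $F$-divided sheaves: to $\mathcal{E}$ one attaches locally free sheaves $(\mathcal{E}_n)_{n\ge 0}$ with $\mathcal{E}_0=\mathcal{E}$ and isomorphisms $\sigma_n\colon F^*\mathcal{E}_{n+1}\xrightarrow{\sim}\mathcal{E}_n$. This comes from writing $\mathcal{D}_X=\varinjlim_m\mathcal{D}_X^{(m)}$ as an increasing union of subsheaves of rings, where a $\mathcal{D}_X^{(m)}$-module structure is exactly a descent datum along $F^m$. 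Under this equivalence $\mathcal{O}_X$ is the unit object (all $\mathcal{E}_n=\mathcal{O}_X$, all $\sigma_n$ canonical), and for each fixed level Frobenius descent identifies
\[\mathrm{Ext}^i_{\mathcal{D}_X^{(m)}}(\mathcal{O}_X,\mathcal{E})\;\cong\;H^i(X,\mathcal{E}_m).\]

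Next I would promote this level-wise identification to $\mathcal{D}_X$ itself. Since a $\mathcal{D}_X$-linear map is precisely one that is $\mathcal{D}_X^{(m)}$-linear for every $m$, I expect
\[R\mathrm{Hom}_{\mathcal{D}_X}(\mathcal{O}_X,\mathcal{E})\;\cong\;R\varprojlim_m R\mathrm{Hom}_{\mathcal{D}_X^{(m)}}(\mathcal{O}_X,\mathcal{E}),\]
which, combined with the identification above and the Milnor exact sequence for a countable inverse limit, yields for each $i$ a short exact sequence
\[0\to {\varprojlim_n}^{1}\,H^{i-1}(X,\mathcal{E}_n)\to H^i_{\mathcal{D}_X}(X,\mathcal{E})\to \varprojlim_n H^i(X,\mathcal{E}_n)\to 0.\]
This is the cohomological mechanism: the $\mathcal{D}_X$-module cohomology is squeezed between the inverse limit and the first derived limit of the coherent cohomologies of the Frobenius descents $\mathcal{E}_n$.

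Properness then enters directly: each $\mathcal{E}_n$ is coherent on the proper scheme $X$, so every $H^i(X,\mathcal{E}_n)$ is finite-dimensional over $k$. An inverse system of finite-dimensional spaces is automatically Mittag--Leffler (the descending chain of images in each term stabilizes), so ${\varprojlim_n}^{1}H^{i-1}(X,\mathcal{E}_n)=0$ and the middle term is identified with $\varprojlim_n H^i(X,\mathcal{E}_n)$. One caveat to record is that the transition maps $H^i(X,\mathcal{E}_{n+1})\to H^i(X,\mathcal{E}_n)$ induced by $\sigma_n\circ F^*$ are $\mathrm{Frob}_k$-semilinear; since $k$ is perfect this is harmless, because images of semilinear maps remain $k$-subspaces and a semilinear bijection preserves $\dim_k$.

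The main obstacle is the finite-dimensionality of the limit $\varprojlim_n H^i(X,\mathcal{E}_n)$ itself: the limit of a tower of surjections between the stable images is finite-dimensional only if these dimensions are bounded, and a priori $\dim_k H^i(X,\mathcal{E}_n)$ need not be bounded in $n$ for bundles of fixed rank. Establishing this uniform bound is the heart of the cohomological boundedness for $F$-divided sheaves, and I would argue as follows. All $\mathcal{E}_n$ share the rank of $\mathcal{E}$, and from $(F^n)^*\mathcal{E}_n\cong\mathcal{E}$ one gets $c_j(\mathcal{E})=p^{nj}c_j(\mathcal{E}_n)$, so integrality forces the Chern classes of $\mathcal{E}_n$ to vanish for $n\gg 0$. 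Each $\mathcal{E}_n$ is again $F$-divided, a property which forces numerical flatness (semistability of slope $0$ with vanishing Chern classes), so by boundedness of semistable sheaves with fixed numerical invariants in positive characteristic the family $\{\mathcal{E}_n\}_{n\gg 0}$ is parametrized by a scheme of finite type. Upper semicontinuity of cohomology over such a parameter space then gives a bound $\dim_k H^i(X,\mathcal{E}_n)\le C$ independent of $n$; with this, the stable-image dimensions are nondecreasing and bounded, hence eventually constant, the transition maps between stable images are eventually semilinear isomorphisms, and the limit is finite-dimensional of dimension at most $C$. The point requiring the most care is precisely this passage from "each piece is finite-dimensional" to "the dimensions are uniformly bounded," i.e. the boundedness of the family of Frobenius descents.
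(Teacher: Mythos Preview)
Your overall architecture is exactly the paper's: the Ogus short exact sequence relating $H^i_{\mathcal{D}_X}$ to the (derived) inverse limit of the $H^i(X,\mathcal{E}_n)$, Mittag--Leffler to kill $R^1\varprojlim$, and then a uniform bound $\sup_n h^i(X,\mathcal{E}_n)<\infty$ forcing the inverse limit to be finite-dimensional. For the boundedness step you go through numerical flatness and Langer-type boundedness of semistable sheaves, whereas the paper verifies Kleiman's criterion directly (constant Hilbert polynomial via singular Riemann--Roch, plus the elementary monotonicity $h^0(\mathcal{E}_{n+1})\le h^0(\mathcal{E}_n)$ on successive normal hyperplane sections chosen by Bertini); the paper explicitly cites your route as a known alternative in the projective case.

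The one genuine gap is that your boundedness argument tacitly assumes $X$ is projective: you need an ample class to speak of semistability and to invoke boundedness of semistable sheaves with fixed invariants, but the theorem is stated for $X$ smooth and merely proper. The paper closes this by proving the stronger assertion $\sup_n h^i(X,\mathcal{E}_n\otimes\mathcal{F})<\infty$ for an arbitrary coherent $\mathcal{F}$ on an arbitrary proper $X$, and running a d\'evissage: induction on $\dim X$, reduction to reduced and then normal integral $X$, and finally Chow's lemma together with the Leray spectral sequence to pass to a projective model $f\colon X'\to X$. The auxiliary $\mathcal{F}$ is not decorative; it is precisely what lets the induction absorb the lower-dimensional supports of $R^qf_*f^*\mathcal{F}$ for $q>0$ and of the kernel and cokernel of $\mathcal{F}\to f_*f^*\mathcal{F}$. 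Adding this reduction (or restricting your statement to projective $X$) would make your argument complete.
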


 Note the  analogy in characteristic $0$ is of course  trivial: under the properness assumption of $X$ over $k$ one could use the de Rham complex to compute $\mathcal{D}_{X}$-module cohomology and the finiteness follows readily from the Hodge to de Rham spectral sequence and the finiteness of coherent cohomology.  In characteristic $p$ the de Rham complex or equivalently the underlying flat connection of $\mathcal{E}$ is too weak since as already mentioned above it drops all information concerning actions on $\mathcal{E}$ of differential operators of higher order from $\mathcal{D}_{X}$ and it is not likely to give the $\mathcal{D}$-module cohomology. \par

We will proceed in a  different way. Let $F$ be the absolute Frobenius morphism on $X$. It is  observed in \cite{katz1970nilpotent} \cite{ASNSP_1975_4_2_1_1_0} that an $\mathcal{O}_{X}$-coherent $\mathcal{D}_{X}$-module can be interpreted as an infinite sequence of coherent sheaves $(\mathcal{E}_{n})_{n\geq 0}$ indexed by non-negative integers with an isomorphism $F^{*}\mathcal{E}_{n+1}\cong\mathcal{E}_{n}$ for each $n$, called a $F$-divided sheaf. We will reduce the finiteness of $\mathcal{D}_{X}$-module cohomology to the following theorem using some results from \cite{ASENS_1975_4_8_3_295_0}.  \par
For a proper scheme $X$ over $k$, a coherent sheaf $\mathcal{E}$ on $X$, and a cohomology degree $i$,  we use $h^{i}(X,\mathcal{E})$ to denote the dimension of $H^{i}(X,\mathcal{E})$ as a $k$-vector space. 
\begin{thm}[Theorem \ref{coherent}]
\label{intro_coherent}
Let $X$ be a  scheme  which is proper over an algebraically closed  field $k$ of characteristic $p$.  Let $(\mathcal{E}_{n})_{n\geq0}$ be a $F$-divided sheaf on $X$ and $\mathcal{F}$ be a coherent sheaf. Then we have
\[\sup_{n}h^{i}(X,\mathcal{E}_{n}\otimes\mathcal{F})<\infty\]
for any $i$.
\end{thm}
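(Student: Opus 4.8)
The plan is to prove the stronger statement that the locally free sheaves $\mathcal{E}_n$ themselves form a \emph{bounded} family on $X$; granting this, the uniform bound on $h^{i}(X,\mathcal{E}_n\otimes\mathcal{F})$ is essentially automatic, since tensoring a bounded family by the fixed coherent sheaf $\mathcal{F}$ again gives a bounded family and every cohomology dimension is a bounded function on the finite-type base parametrizing it. (Concretely, once each $\mathcal{E}_n$ is $m$-regular for one fixed $m$, one resolves $\mathcal{F}$ by a fixed finite complex of vector bundles $\mathcal{L}_\bullet$, tensors by the locally free $\mathcal{E}_n$, and reads off $h^{i}(X,\mathcal{E}_n\otimes\mathcal{F})$ from the hypercohomology spectral sequence, reducing to $\sup_n h^{q}(X,\mathcal{E}_n\otimes\mathcal{L}_p)<\infty$.) I may assume $X$ is projective, the general proper case following by a standard reduction via Chow's lemma, the $F$-divided structure being preserved under pullback because the absolute Frobenius is functorial. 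Fix a very ample $\mathcal{O}_X(1)$ and write $\mu=\mu_H$ for the associated slope.

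The first input is numerical. Since the absolute Frobenius satisfies $F^{*}\mathcal{O}_X(1)\cong\mathcal{O}_X(p)$, it acts on the degree-$j$ part of the Chern character by multiplication by $p^{j}$; from $\mathcal{E}_0\cong (F^{n})^{*}\mathcal{E}_n$ one obtains $\mathrm{ch}_{j}(\mathcal{E}_n)=p^{-jn}\,\mathrm{ch}_{j}(\mathcal{E}_0)$ for every $j$. In particular the rank is constant and all Chern numbers of $\mathcal{E}_n$ remain bounded, so the Hilbert polynomials $\chi(\mathcal{E}_n(t))$ take only finitely many values as $n$ varies.

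The heart of the argument is a uniform slope bound. As $X$ is smooth, $F$ is flat, so $F^{*}$ is exact and sends a subsheaf $V\subset\mathcal{E}_{n+1}$ to a subsheaf $F^{*}V\subset F^{*}\mathcal{E}_{n+1}=\mathcal{E}_n$ with $\mu(F^{*}V)=p\,\mu(V)$. Applying this to the maximal destabilizing subsheaf of $\mathcal{E}_{n+1}$ gives $\mu_{\max}(\mathcal{E}_n)\ge p\,\mu_{\max}(\mathcal{E}_{n+1})$, hence $\mu_{\max}(\mathcal{E}_{n+1})\le p^{-1}\mu_{\max}(\mathcal{E}_n)$. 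Iterating from $n=0$ yields $\mu_{\max}(\mathcal{E}_n)\le\max\{0,\mu_{\max}(\mathcal{E}_0)\}$ for all $n$, a bound independent of $n$. (Dually, since $(\mathcal{E}_n^{\vee})_n$ is again $F$-divided, $\mu_{\min}(\mathcal{E}_n)$ is bounded below.)

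Finally I would invoke boundedness in positive characteristic: a set of torsion-free sheaves on $(X,\mathcal{O}_X(1))$ with finitely many Hilbert polynomials and with $\mu_{\max}$ bounded above is bounded. This rests on the Le Potier--Simpson estimate, which bounds $h^{0}$ of every twist in terms of $\mu_{\max}$, combined with Mumford--Castelnuovo regularity, as carried out by Langer in characteristic $p$. Feeding the two facts above into it shows that $\{\mathcal{E}_n\}_n$ is bounded, and the proof concludes as in the first paragraph. The main obstacle is exactly this last implication: in characteristic $p$ the passage from a maximal-slope bound to boundedness of the family is not formal and is where the positive-characteristic restriction and boundedness results are genuinely needed, whereas the slope contraction $\mu_{\max}(\mathcal{E}_{n+1})\le p^{-1}\mu_{\max}(\mathcal{E}_n)$, which here replaces the usual hard Frobenius-instability estimates, is elementary and is the one place where the $F$-divided structure is really used.
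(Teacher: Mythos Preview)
Your proposal has the right endpoint in the projective case but a genuine gap in the reduction, and this gap is exactly where the paper does its real work. You write ``I may assume $X$ is projective, the general proper case following by a standard reduction via Chow's lemma,'' and later ``As $X$ is smooth, $F$ is flat.'' Neither is justified: the theorem does not assume $X$ smooth, Chow's lemma gives a projective $X'\to X$ but not a smooth, normal, or even integral one, and knowing that $\{f^{*}\mathcal{E}_n\}$ is bounded on $X'$ (or that $h^{i}(X',f^{*}\mathcal{E}_n\otimes f^{*}\mathcal{F})$ is bounded) does \emph{not} formally yield a bound on $h^{i}(X,\mathcal{E}_n\otimes\mathcal{F})$. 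The paper's proof (Steps~2--4) carries out precisely this descent: one reduces to reduced, then to normal integral via normalization, then applies Chow's lemma, and at each stage compares $\mathcal{F}$ with $\nu_{*}\nu^{*}\mathcal{F}$ or $f_{*}f^{*}\mathcal{F}$, controls the discrepancy by the induction hypothesis on lower-dimensional supports, and uses the Leray spectral sequence together with the elementary bound of Lemma~\ref{stupid} to transfer the estimate from $X'$ back to $X$. This is the entire reason the auxiliary coherent sheaf $\mathcal{F}$ appears in the statement; without it the induction does not close. Your resolution of $\mathcal{F}$ by locally frees on a general proper $X$ is also not available in general.

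For the projective normal integral case your argument (slope contraction $\mu_{\max}(\mathcal{E}_{n+1})\le p^{-1}\mu_{\max}(\mathcal{E}_n)$ together with Langer's boundedness theorem) is a legitimate alternative to the paper's route, which instead shows numerical triviality of all $c_i(\mathcal{E}_n)$ (Lemma~\ref{f_div_tri}), deduces a common Hilbert polynomial via singular Riemann--Roch, and then verifies Kleiman's criterion using Bertini-type normal hyperplane sections and the elementary fact (Lemma~\ref{decr}) that $h^{0}(X,\mathcal{E}_n)$ is non-increasing in $n$. The paper's approach is more self-contained, avoiding the deep positive-characteristic boundedness input you invoke; your approach trades that for a shorter slope argument. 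Either way, you must first reach a normal integral projective variety before slopes or Kleiman's criterion make sense, and that reduction is the part you have skipped.
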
 
The presence of an auxiliary $\mathcal{F}$ in the statement of Theorem \ref{intro_coherent} is for technical purpose. For our application to $\mathcal{D}_{X}$-modules  it suffices to take $\mathcal{F}=\mathcal{O}_{X}$. 

\subsection*{\normalsize Conventions and Notations}
\begin{description}
\item[Base field:] we will fix an algebraically closed field $k$ of characteristic $p$.  All schemes in this article are of finite type over $k$. 
\item[Frobenius:] we use $F$ to denote the absolute Frobenius morphism on an arbitrary scheme $X$ over $k$.
\item[Vector bundle:] we use interchangeably the terminologies vector bundles and locally free sheaves of finite rank. 
\end{description}
\subsection*{\normalsize Acknowledgements}
We thank Professor João Pedro dos Santos for posing the question on finiteness of infinitesimal cohomology. We thank Professor João Pedro dos Santos and Professor Niels Borne for reading the manuscript and for many discussions. 
\section{\normalsize Preliminaries}
\subsection{\normalsize $\mathcal{D}$-modules and $F$-divided sheaves}
We summarize some general facts concerning $F$-divided sheaves and $\mathcal{D}$-modules in positive characteristic, and the equivalence between them for smooth schemes. We refer to \cite{ASNSP_1975_4_2_1_1_0} \cite{AIF_2013__63_6_2267_0} for more details. 
\begin{defn}
Let $X$ be a scheme over $k$. The category $\mathrm{Fdiv}(X)$ of $F$-divided sheaves on $X$ is the category whose objects are sequences $(\mathcal{E}_{n},\sigma_{n})_{n\geq 0}$ where $\mathcal{E}_{n}$ are coherent sheaves with isomorphisms $\sigma_{n}: F^{*}\mathcal{E}_{n+1}\cong \mathcal{E}_{n}$. Arrows between two objects $(\mathcal{E}_{n},\sigma_{n})_{n\geq 0}$ and $(\mathcal{F}_{n},\tau_{n})_{n\geq 0}$ are projective systems $(\alpha_{n})_{n\geq 0}: \mathcal{E}_{n}\rightarrow \mathcal{F}_{n}$ of morphisms between $\mathcal{O}_{X}$-modules, verifying $\tau_{n}\circ F^{*}(\alpha_{n+1})=\alpha_{n}\circ\sigma_{n}$.
\end{defn}
For a $F$-divided sheaf $(\mathcal{E}_{n},\sigma_{n})_{n\geq 0}$, the isomorphisms $\sigma_{n}$ will not play a significant role in the sequel, and we will omit them from notions by simply writing $(\mathcal{E}_{n})_{n\geq 0}$ for a $F$-divided sheaf. \par
\begin{lem}[Lemma 4.2 \cite{10.2748/tmj.20200727}]
Let $(\mathcal{E}_{n})_{n\geq 0}$ be a $F$-divided sheaf. Then $\mathcal{E}_{n}$ are all vector bundles.
\end{lem}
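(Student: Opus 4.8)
The plan is to reduce the statement to a local, purely algebraic fact and then exploit the single distinctive feature of characteristic $p$: Frobenius pullback sends an ideal to its Frobenius power, which sinks arbitrarily deep into the maximal ideal. Fix a point $x\in X$ and write $A=\mathcal{O}_{X,x}$ with maximal ideal $\mathfrak{m}$; this is a Noetherian local ring. Since the absolute Frobenius $F$ is the identity on the underlying topological space, iterating the isomorphisms $\sigma_{n}$ yields $\mathcal{E}_{n}\cong (F^{m})^{*}\mathcal{E}_{n+m}$ for every $m\geq 0$, and on stalks this reads $M\cong M_{m}\otimes_{A,F^{m}}A$, where $M:=(\mathcal{E}_{n})_{x}$, $M_{m}:=(\mathcal{E}_{n+m})_{x}$, and $F^{m}\colon A\to A$ is $a\mapsto a^{p^{m}}$.

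First I would record that the fibre dimension $\dim_{\kappa(x)}(\,\cdot\otimes\kappa(x))$ is invariant under $F^{*}$: the map $F$ induces the $p$-power Frobenius on the residue field $\kappa(x)$, and base change of a finite-dimensional vector space along a field homomorphism preserves dimension. Hence all the $M_{m}$ have the same minimal number of generators $\mu:=\dim_{\kappa(x)}M\otimes\kappa(x)$. The core step is then to compute the Fitting ideal $\mathrm{Fitt}_{\mu-1}$. Using that Fitting ideals commute with base change, and that pulling back an ideal $J\subseteq A$ along $F^{m}$ produces the Frobenius power $J^{[p^{m}]}=(g^{p^{m}}:g\in J)$, I obtain $\mathrm{Fitt}_{\mu-1}(M)=\mathrm{Fitt}_{\mu-1}(M_{m})^{[p^{m}]}$. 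Because $M_{m}$ requires exactly $\mu$ generators, $\mathrm{Fitt}_{\mu-1}(M_{m})\subseteq\mathfrak{m}$, so $\mathrm{Fitt}_{\mu-1}(M)\subseteq\mathfrak{m}^{[p^{m}]}\subseteq\mathfrak{m}^{p^{m}}$ for every $m$. By Krull's intersection theorem $\bigcap_{m}\mathfrak{m}^{p^{m}}=0$, whence $\mathrm{Fitt}_{\mu-1}(M)=0$; combined with the fact that $M$ is generated by $\mu$ elements, this forces $M$ to be free of rank $\mu$. As this holds at every point and $\mathcal{E}_{n}$ is coherent, a basis at $x$ lifts to an isomorphism over a neighbourhood, so $\mathcal{E}_{n}$ is locally free of finite rank, i.e.\ a vector bundle.

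The main obstacle is the interface between Fitting ideals and Frobenius: one must verify both the base-change formula $\mathrm{Fitt}_{i}(N\otimes_{A,f}B)=f(\mathrm{Fitt}_{i}(N))B$ applied to $f=F^{m}$, and the stability of the generator count $\mu$ along the tower, since it is precisely this stability that guarantees the relevant Fitting ideal genuinely lies inside $\mathfrak{m}$ before taking Frobenius powers. Once these two points are secured, the Krull intersection theorem closes the argument at once. I would note that the proof uses no smoothness or regularity hypothesis, only that $A$ is Noetherian local of characteristic $p$, which is consistent with the lemma being stated for an arbitrary scheme over $k$.
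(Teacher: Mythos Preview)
Your proof is correct and follows essentially the same route as the paper: localize at a point, use that Fitting ideals commute with base change so that Frobenius pullback replaces them by their Frobenius powers, and conclude via Krull's intersection theorem. The only cosmetic difference is that the paper treats all Fitting ideals at once via the criterion \cite[\href{https://stacks.math.columbia.edu/tag/07ZD}{Tag 07ZD}]{stacks-project}, whereas you single out $\mathrm{Fitt}_{\mu-1}$ after first checking that the minimal number of generators $\mu$ is constant along the tower.
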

\begin{proof}
We prove for $\mathcal{E}_{0}$. It suffices to prove that for any point $x\in X$, $\mathcal{E}_{0,x}=\mathcal{E}_{0}\otimes_{\mathcal{O}_{X}}\mathcal{O}_{X,x}$ is free as an $\mathcal{O}_{X,x}$-module.  For an ideal $I$ of $\mathcal{O}_{X,x}$ and a $p$-power $p^{n}$, we write $I^{[p^{n}]}$ to be the ideal generated by all $r^{p^{n}}$ with $r\in I$. By
\cite[\href{https://stacks.math.columbia.edu/tag/07ZD}{Tag 07ZD}]{stacks-project} it suffices to  show that, for any $i$, if the $i$-th Fitting ideal $Fit_{i}(\mathcal{E}_{x})$ of $\mathcal{E}_{x}$ is not $\mathcal{O}_{X,x}$, it is zero. Since  $(F^{n})^{*}\mathcal{E}_{n}\cong \mathcal{E}_{0}$, we have $Fit_{i}(\mathcal{E}_{0,x})=Fit_{i}(\mathcal{E}_{n,x})^{[p^{n}]}$ and then $Fit_{i}(\mathcal{E}_{0,x})\subset \mathfrak{m}_{X,x}^{p^{n}}$ for all $n\geq 0$. We conclude by noting that $\cap_{n\geq 0}\,\mathfrak{m}_{X,x}^{n}=0$.

\end{proof}
\begin{rmk} 
The lemma suggests that $F$-divided sheaves have nice categorical properties.  These properties can be pursued, for example, in the direction of Tannakian categories, for which we will not go into details in this article.  We refer to \cite{ASNSP_1975_4_2_1_1_0}\cite{dos2007fundamental}\cite{esnault2010simply}\cite{AIF_2013__63_6_2267_0}\cite{dos2015homotopy}\cite{esnault2016simply} for this topic. 
\end{rmk}
\begin{defn}
Let $X$ be smooth over $k$. We inductively define an increasing sequence $\mathcal{D}_{X}^{\leq n}$ of $k$-linear differential operators of order at most $n$ on $X$: let $\mathcal{D}_{X}^{\leq 0}=\mathcal{O}_{X}$, acting on $\mathcal{O}_{X}$ simply by multiplication. Once $\mathcal{D}_{X}^{\leq n}$ is defined, we define $\mathcal{D}_{X}^{\leq n+1}$  to be the subsheaf  of $\mathcal{E}nd_{k}(\mathcal{O}_{X})$ locally consisting of operators $D$, such that the operator $D_{a}$
\[D_{a}(x)=D(ax)-aD(x)\]
is in $\mathcal{D}_{X}^{\leq n}$ for any  local section $a\in \mathcal{O}_{X}$. 
\end{defn}
\begin{emp}
Suppose $X$ is smooth of dimension $1$, and let $x$ be an étale local coordinate of $X$. Then $\mathcal{D}_{X}$ is generated over $k[x]$ by operators 
\[D_{k}, k\geq 0, D_{k}(x^{l})=\binom{l}{k}x^{l-k}, \]
verifying the relation \[D _{k}D_{l}=\binom{k+l}{k}D_{k+l}. \]
In particular, $\mathcal{D}_{X}$ is not generated by its subsheaf $\mathcal{D}_{X}^{\leq 1}$ as an algebra over $\mathcal{O}_{X}$, contrary to the case in characteristic $0$.
\end{emp}
\begin{defn} A $\mathcal{D}_{X}$-module on $X$ is a quasi-coherent $\mathcal{O}_{X}$-module $\mathcal{E}$ on $X$, together with a morphism of algebras:
\[\mathcal{D}_{X}\rightarrow \mathcal{E}nd_{k}(\mathcal{E}). \]
\end{defn}
\begin{rmk}
In characteristic $0$, the ring of differential operators $\mathcal{D}_{X}$ is noetherian. Hence any $\mathcal{O}_{X}$-coherent $\mathcal{D}_{X}$-module is automatically coherent as a $\mathcal{D}_{X}$-module, in the sense of  being locally finitely presented.  However, this is not true in positive characteristic. It is already wrong for $\mathcal{O}_{X}$.  
\end{rmk}
The category of $\mathcal{D}_{X}$-modules, denoted by $\mathcal{D}_{X}$-$\mathrm{mod}$, is an abelian category and it has enough injective objects.
\begin{defn}
The $i$-th $\mathcal{D}_{X}$-module cohomology, denoted by $H_{\mathcal{D}_{X}}^{i}(X,\cdot)$, is the $i$-th right derived functor 
of the following left exact functor :
\[H_{\mathcal{D}_{X}}^{0}:\mathcal{D}_{X}-\mathrm{mod} \rightarrow \mathrm{Vect}_{k},\mathcal{E}\mapsto \mathrm{Hom}_{\mathcal{D}_{X}}(\mathcal{O}_{X},\mathcal{E}).\]
\end{defn}
\begin{thm}[Theorem 1.3  \cite{ASNSP_1975_4_2_1_1_0} or Theorem 1.2  \cite{AIF_2013__63_6_2267_0}]
\label{df}
Let $X$ be smooth over $k$. There is an equivalence between the full subcategory of $\mathcal{D}_{X}$-modules consisting of $\mathcal{O}_{X}$-coherent objects,  and the category $\mathrm{Fdiv}(X)$.  To be explicit, the functor going from the category of $\mathcal{D}_{X}$-modules to $Fdiv(X)$ is defined as follows: starting from a $\mathcal{D}_{X}$-module $\mathcal{E}$ we first set $\mathcal{E}_{0}=\mathcal{E}$.  For $n>0$ consider the subsheaf $\mathcal{E}_{n}$ of $\mathcal{E}$ consisting of local sections  annihilated by differential operators $D$ of order $<p^{n}$ with $D(1)=0$. The subsheaf $\mathcal{E}_{n}$ is equipped with a coherent $\mathcal{O}_{X}$-module structure by $a\cdot s=a^{p^{n}}s$, for $a\in\mathcal{O}_{X}$ and $s\in \mathcal{E}_{n}$, where the right hand side is the multiplication of $a^{p^{n}}$ on $s$, by considering $s$ as a local section of $\mathcal{E}$.  We have natural isomorphisms $F^{*}\mathcal{E}_{n+1}\cong\mathcal{E}_{n}$.
\end{thm}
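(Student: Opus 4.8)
The plan is to realize both functors explicitly and to prove they are mutually quasi-inverse, the engine being classical Cartier descent applied one Frobenius step at a time. I work locally with étale coordinates $x_{1},\ldots,x_{d}$, so that $\mathcal{D}_{X}$ is free over $\mathcal{O}_{X}$ on the divided-power operators $\partial^{[\alpha]}=\prod_{i}\partial_{x_{i}}^{[\alpha_{i}]}$, with $\mathrm{ord}(\partial^{[\alpha]})=|\alpha|$ and the multiplication rules extending those of the one-dimensional Example. Two elementary consequences of these rules, both special to characteristic $p$, drive everything: first, $\partial_{x_{i}}^{m}=m!\,\partial_{x_{i}}^{[m]}$, so for $|\alpha|<p$ one has $\partial^{[\alpha]}=\tfrac{1}{\alpha!}\prod_{i}\partial_{x_{i}}^{\alpha_{i}}$; second, $\partial_{x_{i}}^{p}=p!\,\partial_{x_{i}}^{[p]}=0$ in $\mathcal{D}_{X}$. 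Recall also that, $X$ being regular, Kunz's theorem makes the absolute Frobenius $F$ finite and faithfully flat, so faithfully flat descent along $F$ is available.

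For an $\mathcal{O}_{X}$-coherent $\mathcal{D}_{X}$-module $\mathcal{E}$, the order-$1$ operators define an integrable connection $\nabla$ with $\nabla_{\partial_{x_{i}}}$ equal to the action of $\partial_{x_{i}}$. The two relations above show that its $p$-curvature vanishes: $\nabla_{\partial_{x_{i}}}^{p}-\nabla_{\partial_{x_{i}}^{p}}$ is the action of $\partial_{x_{i}}^{p}=0$. Moreover, for $|\alpha|<p$ the identity $\partial^{[\alpha]}=\tfrac{1}{\alpha!}\prod_{i}\partial_{x_{i}}^{\alpha_{i}}$ shows that a section is killed by all order-$<p$ operators $D$ with $D(1)=0$ if and only if it is killed by every $\partial_{x_{i}}$, so that the subsheaf $\mathcal{E}_{1}$ of the statement is exactly the horizontal subsheaf $\ker\nabla$. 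Cartier descent then gives that $\mathcal{E}_{1}$, with the twisted structure $a\cdot s=a^{p}s$, is coherent and that the canonical adjunction map $F^{*}\mathcal{E}_{1}\to\mathcal{E}$ is an isomorphism.

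The inductive heart of the argument is to equip $\mathcal{E}_{1}$ with a canonical $\mathcal{D}_{X}$-module structure, built from the higher-order operators of $\mathcal{E}$ through the Frobenius twist: the operators $\partial_{x_{i}}^{[p^{m}]}$ with $m\geq1$ acting on $\mathcal{E}$ preserve $\mathcal{E}_{1}=\ker\nabla$ and, after the identification $a\cdot s=a^{p}s$, play on $\mathcal{E}_{1}$ the role that the operators $\partial_{x_{i}}^{[p^{m-1}]}$ play on $\mathcal{E}$. Granting this, I define $\mathcal{E}_{n+1}:=(\mathcal{E}_{n})_{1}$ inductively; unwinding the twist shows that this $\mathcal{E}_{n}$ coincides with the subsheaf of sections of $\mathcal{E}$ annihilated by all $D$ of order $<p^{n}$ with $D(1)=0$ (the point being that, variable by variable, the operators $\partial_{x_{i}}^{[p^{j}]}$ of order $p^{j}<p^{n}$ already cut out this subsheaf, exactly as in the one-dimensional Lucas computation), and the one-step isomorphisms assemble to $F^{*}\mathcal{E}_{n+1}\cong\mathcal{E}_{n}$. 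Since a $\mathcal{D}_{X}$-linear map automatically preserves each kernel $\mathcal{E}_{n}$, this is functorial, yielding a functor into $\mathrm{Fdiv}(X)$, whose values are vector bundles by the Lemma above.

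For the quasi-inverse, given $(\mathcal{E}_{n})_{n\geq0}\in\mathrm{Fdiv}(X)$ I set $\mathcal{E}:=\mathcal{E}_{0}$ and reconstruct a $\mathcal{D}_{X}$-action by assembling connections level by level: the isomorphism $F^{*}\mathcal{E}_{1}\cong\mathcal{E}_{0}$ endows $\mathcal{E}_{0}$ with the canonical ($p$-curvature-zero) Frobenius-descent connection, providing the operators of order $<p$; the higher isomorphisms $F^{*}\mathcal{E}_{n+1}\cong\mathcal{E}_{n}$ supply, after Frobenius twisting, the operators of the successive levels, and these glue to an action of all of $\mathcal{D}_{X}=\bigcup_{n}\mathcal{D}_{X}^{<p^{n}}$. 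That the two functors are mutually quasi-inverse follows from faithfulness of Cartier descent: reforming $(\mathcal{E}_{n})$ from $\mathcal{E}$ and reassembling recovers the original action because each level is determined by the descent it induces, while starting from $(\mathcal{E}_{n})$ the kernel construction returns each $\mathcal{E}_{n}$ by the explicit description of the descended module; full faithfulness is then the statement that a $\mathcal{D}_{X}$-linear map is the same datum as a morphism of the associated projective systems. The main obstacle is the inductive step of the third paragraph, namely checking that the higher operators genuinely descend to a $\mathcal{D}_{X}$-structure on $\mathcal{E}_{1}$ compatible with the Frobenius twist; this is where the divided-power calculus and the smoothness of $X$ enter essentially, and where the bookkeeping matching the operator relations with the Cartier descent data must be carried out with care.
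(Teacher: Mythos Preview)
The paper does not supply its own proof of this theorem; it is quoted from the literature (Gieseker, resp.\ Berthelot), so there is no in-paper argument to compare against. Your outline via iterated Cartier descent is precisely the standard proof in those references and is correct: the vanishing of $p$-curvature (because $\partial_{x_i}^{p}=p!\,\partial_{x_i}^{[p]}=0$ in $\mathcal{D}_X$), the identification $\mathcal{E}_1=\ker\nabla$, and the isomorphism $F^{*}\mathcal{E}_1\cong\mathcal{E}_0$ are all right, and the inductive step of letting $\partial_{x_i}^{[p^{m}]}$ on $\mathcal{E}$ act as $\partial_{x_i}^{[p^{m-1}]}$ on the twisted $\mathcal{E}_1$ is exactly the Frobenius-level filtration of $\mathcal{D}_X$. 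The compatibility you flag as delicate reduces to the Leibniz identity $\partial_{x_i}^{[p]}(a^{p}s)=a^{p}\,\partial_{x_i}^{[p]}(s)+(\partial_{x_i}a)^{p}s$ for $s\in\mathcal{E}_1$, using that $\partial_{x_i}^{[k]}(a^{p})=0$ for $0<k<p$ and $\partial_{x_i}^{[p]}(a^{p})=(\partial_{x_i}a)^{p}$; once this is written out the rest is bookkeeping.
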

\begin{lem}
\label{decr}
Let $X$ be an integral normal scheme which is proper over $k$. Let $(\mathcal{E}_{n})_{n\geq 0}$ be a $F$-divided sheaf on $X$. We have $h^{0}(X,\mathcal{E}_{n})$, $n\geq 0$, to form a decreasing (not necessarily strict) sequence. 
\end{lem}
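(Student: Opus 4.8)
The plan is to exploit the defining isomorphisms $F^{*}\mathcal{E}_{n+1}\cong\mathcal{E}_{n}$ to compare consecutive terms, thereby reducing the monotonicity to the single inequality $h^{0}(X,\mathcal{E}_{n+1})\le h^{0}(X,F^{*}\mathcal{E}_{n+1})$, which I will prove for any vector bundle. Concretely, I would consider the pullback-on-sections map
\[
\phi_{n}\colon H^{0}(X,\mathcal{E}_{n+1})\longrightarrow H^{0}(X,F^{*}\mathcal{E}_{n+1})\xrightarrow{\ \sim\ }H^{0}(X,\mathcal{E}_{n}),
\]
where the first arrow is induced by the adjunction unit $\mathcal{E}_{n+1}\to F_{*}F^{*}\mathcal{E}_{n+1}$ (using that $\Gamma(X,F_{*}(-))=\Gamma(X,-)$, since the absolute Frobenius is a homeomorphism on the underlying topological space) and the second is the isomorphism $F^{*}\mathcal{E}_{n+1}\cong\mathcal{E}_{n}$. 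Two facts about $\phi_{n}$ must be established: that it is injective, and that, although it is only Frobenius-semilinear, it nonetheless forces the desired inequality of dimensions.

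For injectivity I would argue locally. Since each $\mathcal{E}_{n}$ is a vector bundle (as recorded above), on a trivializing open $U$ the unit map $\mathcal{E}_{n+1}\to F_{*}F^{*}\mathcal{E}_{n+1}$ is identified with the additive map $\mathcal{O}_{U}^{\oplus r}\to\mathcal{O}_{U}^{\oplus r}$ given by $(a_{i})\mapsto(a_{i}^{p})$. Because $X$ is integral, hence reduced, the $p$-th power map $a\mapsto a^{p}$ is injective on sections of $\mathcal{O}_{X}$; thus the unit map is injective as a morphism of sheaves, and left exactness of $\Gamma$ yields injectivity of $\phi_{n}$.

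The one genuinely non-formal point, and the step I expect to require the most care, is that $\phi_{n}$ is not $k$-linear but $\mathrm{Frob}_{k}$-semilinear: chasing the tensor relation in $F^{*}\mathcal{E}_{n+1}$ shows $\phi_{n}(\lambda s)=\lambda^{p}\,\phi_{n}(s)$ for $\lambda\in k$. I would then show that an injective additive semilinear map over a \emph{perfect} field (and $k$, being algebraically closed, is perfect) still sends linearly independent families to linearly independent families: given a relation $\sum_{i}\mu_{i}\phi_{n}(s_{i})=0$, write each $\mu_{i}=\nu_{i}^{p}$, so that $\sum_{i}\nu_{i}^{p}\phi_{n}(s_{i})=\phi_{n}\bigl(\sum_{i}\nu_{i}s_{i}\bigr)=0$, whence $\sum_{i}\nu_{i}s_{i}=0$ by injectivity and the $\nu_{i}$, hence the $\mu_{i}$, all vanish. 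It follows that $\dim_{k}H^{0}(X,\mathcal{E}_{n+1})\le\dim_{k}H^{0}(X,\mathcal{E}_{n})$. Finiteness of all these dimensions is guaranteed by the properness of $X$ over $k$, so the values $h^{0}(X,\mathcal{E}_{n})$ form a weakly decreasing sequence, as claimed. I note that normality plays no essential role here; reducedness, which is implied by integrality, is all that the argument needs.
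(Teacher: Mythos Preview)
Your proof is correct and follows the same core idea as the paper: the adjunction unit $\mathcal{E}_{n+1}\to F_{*}F^{*}\mathcal{E}_{n+1}\cong F_{*}\mathcal{E}_{n}$ is injective, and after a Frobenius twist this yields the dimension inequality. The one difference is in how injectivity is verified. The paper first restricts to the smooth locus $X^{\mathrm{sm}}$, invokes Hartogs (using normality and $\dim X\ge 2$) to identify $H^{0}(X,\mathcal{E}_{n})$ with $H^{0}(X^{\mathrm{sm}},\mathcal{E}_{n})$, and then checks injectivity \'etale locally on $X^{\mathrm{sm}}$; the curve case is handled separately. You instead work Zariski locally on all of $X$, trivializing the vector bundle and reducing to injectivity of $a\mapsto a^{p}$ on a reduced ring. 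Your route is more direct and, as you observe, shows that normality plays no role---reducedness suffices. The paper's detour through $X^{\mathrm{sm}}$ and Hartogs is not needed once one remembers that the $\mathcal{E}_{n}$ are already locally free on $X$ itself.
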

\begin{proof}
We first deal with the case that $X$ has dimension $>1$. Use $X^{sm}$ to denote the smooth locus of $X$. By Hartogs' extension theorem we have $H^{0}(X,\mathcal{E}_{n})\cong H^{0}(X^{sm},\mathcal{E}_{n})$ and in particular the latter is finite dimensional. The natural morphism 
\[\mathcal{E}_{n}\rightarrow  F_{*}F^{*}\mathcal{E}_{n}\cong F_{*}\mathcal{E}_{n-1}\] is injective over $X^{sm}$, as this can be checked étale locally. Taking global sections we get an injection of cohomology groups 
\[H^{0}(X^{sm},\mathcal{E}_{n})\hookrightarrow H^{0}(X^{sm}, \mathcal{E}_{n-1}).\] This  is not a morphism of $k$-vector spaces but we can $k$-linearize it by making a Frobenius twist
\[H^{0}(X^{sm},\mathcal{E}_{n})\otimes_{k,F}k\hookrightarrow H^{0}(X^{sm}, \mathcal{E}_{n-1}),\]
from which we conclude. \par
It remains to prove for $X$ being a smooth proper curve. This can be proved in the same manner as above. 
\end{proof}
\subsection{\normalsize Background in intersection theory}
Following \cite{fulton2012intersection} faithfully we collect some results in intersection theory for later use. In this subsection let $X$ be a scheme which is locally of finite type and seperated over $k$. We use $A_{m}(X)_{\mathbb{Q}}$ to denote the chow group, defined as the group generated by $m$-cycles with rational coefficients, modulo rational equivalence.  We use $A_{*}(X)_{\mathbb{Q}}$ to denote the direct sum $\oplus_{m}A_{*}(X)_{\mathbb{Q}}$. Let us say 
$A_{*}(X)_{\mathbb{Q}}$ is obviously graded where the degree-$m$ piece is exactly $A_{m}(X)_{\mathbb{Q}}$. If $X$ is proper over $k$, we have a morphism 
\[\int_{X}: A_{*}(X)_{\mathbb{Q}} \rightarrow \mathbb{Q} ,\]
by first projecting to $A_{0}(X)$ and then taking the degree. \par
For us chern classes and characters are defined as morphisms, which are well-defined in the greatest generality. Following Chapter $3$  \cite{fulton2012intersection}, for any locally free sheaf $\mathcal{E}$ over $X$ of finite rank, we have the $i$-th chern class as a morphism \[ c_{i}(\mathcal{E})\cap: A_{*}(X)_{\mathbb{Q}}\rightarrow A_{*-i}(X)_{\mathbb{Q}}.\] The morphism $c_{i}(\mathcal{E})$ sends $A_{m}(X)_{\mathbb{Q}}$ to $A_{m-i}(X)_{\mathbb{Q}}$ and  it does not respect  the grading on $A_{*}(X)_{\mathbb{Q}}$. The (total) Chern character $ch(\mathcal{E})$ can be defined in terms of $c_{i}$ (Example 3.2.3 \cite{fulton2012intersection}) as well, still as a morphism from $A_{*}(X)_{\mathbb{Q}}$ to itself. \par
We shall need a Riemann-Roch theorem for singular schemes, from Chapter 18 \cite{fulton2012intersection} or the oringinal paper \cite{PMIHES_1975__45__101_0}.
\begin{thm}[Theorem 18.3, Corollary 18.3.1  \cite{fulton2012intersection}]
\label{RR}
There exists a Todd class, denoted by $td(X)$, defined as $\tau(\mathcal{O}_{X})$ where $\tau$ is the morphism from the Grothendieck group of coherent sheaves $K_{\circ}(X)$ to $A_{*}(X)_{\mathbb{Q}}$ from Theorem 18.3 \cite{fulton2012intersection}. If $X$ is proper over $k$, for any vector bundle $\mathcal{E}$ we have
\[\chi(X,\mathcal{E})=\int_{X}ch(\mathcal{E})\cap td(X). \]
\end{thm}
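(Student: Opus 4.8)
The plan is to deduce the Riemann--Roch formula from the two structural properties of the transformation $\tau$ provided by Theorem 18.3 of \cite{fulton2012intersection}, treating the construction of $\tau$ itself as an established black box. The substantive content of Fulton's chapter is precisely the existence of $\tau\colon K_{\circ}(X)\rightarrow A_{*}(X)_{\mathbb{Q}}$ together with (i) its covariance for proper morphisms and (ii) its compatibility with the Chern-character action of vector bundles; granting these, the displayed formula is a formal consequence, and this formal consequence is exactly Corollary 18.3.1. So the proof I would write only carries out step (ii) followed by step (i), with careful bookkeeping of the identifications at a point.

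First I would record the module (projection) property of $\tau$. The group $K_{\circ}(X)$ is a module over the Grothendieck ring $K^{\circ}(X)$ of vector bundles via tensor product, and $\tau$ satisfies $\tau(\alpha\cdot\beta)=ch(\alpha)\cap\tau(\beta)$ for $\alpha\in K^{\circ}(X)$ and $\beta\in K_{\circ}(X)$. Taking $\alpha=[\mathcal{E}]$ and $\beta=[\mathcal{O}_{X}]$, and using $[\mathcal{E}]\cdot[\mathcal{O}_{X}]=[\mathcal{E}]$ in $K_{\circ}(X)$, this gives
\[\tau([\mathcal{E}])=ch(\mathcal{E})\cap\tau(\mathcal{O}_{X})=ch(\mathcal{E})\cap td(X).\]
Next I would apply covariance of $\tau$ to the structure morphism $\pi\colon X\rightarrow\mathrm{Spec}(k)$, which is proper by hypothesis. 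Covariance asserts $\pi_{*}\circ\tau_{X}=\tau_{\mathrm{pt}}\circ\pi_{*}$ as maps $K_{\circ}(X)\rightarrow A_{*}(\mathrm{pt})_{\mathbb{Q}}$, where the left $\pi_{*}$ is the pushforward on $K_{\circ}$ and the right $\pi_{*}$ is the proper pushforward on Chow groups. I would then identify the three ingredients at the point: on $K_{\circ}$ the pushforward sends $[\mathcal{E}]$ to $\sum_{i}(-1)^{i}[H^{i}(X,\mathcal{E})]$, which under $K_{\circ}(\mathrm{pt})=\mathbb{Z}$ is $\chi(X,\mathcal{E})$; the transformation $\tau_{\mathrm{pt}}$ for a point is the inclusion $\mathbb{Z}\hookrightarrow\mathbb{Q}=A_{*}(\mathrm{pt})_{\mathbb{Q}}$; and on Chow groups $\pi_{*}\colon A_{*}(X)_{\mathbb{Q}}\rightarrow A_{*}(\mathrm{pt})_{\mathbb{Q}}=\mathbb{Q}$ coincides with the map $\int_{X}$ of the excerpt, since projecting to $A_{0}(X)$ and taking the degree is exactly pushforward to a point. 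Chaining these equalities yields
\[\chi(X,\mathcal{E})=\tau_{\mathrm{pt}}\pi_{*}[\mathcal{E}]=\pi_{*}\tau_{X}([\mathcal{E}])=\int_{X}ch(\mathcal{E})\cap td(X),\]
which is the asserted formula.

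The main obstacle does not lie in this derivation, which is purely formal, but in the construction of $\tau$ and the verification of its covariance and module properties: that is the deep input of \cite{fulton2012intersection} (built via the graph construction and deformation to the normal cone, following \cite{PMIHES_1975__45__101_0}), which I would cite rather than reprove. The only points genuinely requiring care are the identification of $\tau_{\mathrm{pt}}$ with the inclusion $\mathbb{Z}\hookrightarrow\mathbb{Q}$ and the matching of the Chow-theoretic pushforward $\pi_{*}$ with the operator $\int_{X}$ as defined in the excerpt; once these are pinned down, the proof is complete.
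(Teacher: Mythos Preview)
Your derivation is correct and is exactly the standard passage from Theorem~18.3 to Corollary~18.3.1 in \cite{fulton2012intersection}: apply the module property $\tau([\mathcal{E}])=ch(\mathcal{E})\cap\tau(\mathcal{O}_{X})$, then push forward along $\pi:X\to\mathrm{Spec}(k)$ using covariance of $\tau$. Note, however, that the paper itself offers no proof of this statement at all; it is recorded purely as a citation from \cite{fulton2012intersection} and used as a black box in Lemma~\ref{hil_poly}, so there is no in-paper argument to compare against.
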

We make the definition of numerically trvial vector bundles.
\begin{defn}
\label{num_tri}
Let $X$ be proper over $k$. A vector bundle $\mathcal{E}$ on $X$ is said to be numerically trivial, if for all $i>0$, the homomorphism 
\[\int_{X}c_{i}(\mathcal{E})\cap: A_{*}(X)_{\mathbb{Q}}\rightarrow \mathbb{Q} \]
is zero. 
\end{defn}
\subsection{Boundedness of sheaves}
In this subsection let $X$ be an integral normal scheme, which is projective over $k$ with a fixed very ample line bundle $\mathcal{L}$. For a coherent shesf $\mathcal{E}$ we use $\mathcal{E}(t)$ to denote the tensor product $\mathcal{E}\otimes\mathcal{L}^{t}$. \par
\begin{defn}
A family of coherent sheaves $\mathcal{E}_{\iota}$, $\iota\in I$ is said to be bounded if there exists a scheme $S$ of finite type over $k$, together with a coherent sheaf $\mathcal{E}$ on $X\times S$, such that each $\mathcal{E}_{\iota}$ is isomorphic to $\mathcal{E}|_{s\times X}$ for some closed point $s\in S$. 
\end{defn}
Clearly, whenever there is a bounded family $\mathcal{E}_{\iota}$, $\iota\in I$, we obtain a  new bounded family $\mathcal{E}_{\iota}\otimes\mathcal{F}$ by tensoring with an arbitrary coherent sheaf $\mathcal{F}$. 

\begin{lem}
\label{uni_bound}
Let  $\mathcal{E}_{\iota},\iota\in I$ be a bounded family of coherent sheaves. For each degree $i$, the dimension $h^{i}(X,\mathcal{E}_{\iota})$ is uniformly bounded for all $\iota\in I$.
\end{lem}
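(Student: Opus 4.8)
The plan is to reduce the assertion to the classical upper semicontinuity of cohomology dimensions in a flat family. By the definition of boundedness we are handed a finite type $k$-scheme $S$ and a coherent sheaf $\mathcal{E}$ on $X\times S$ such that every member of the family is isomorphic to the restriction $\mathcal{E}|_{X\times s}$ for some closed point $s\in S$. Since $X$ is projective over $k$, the projection $p\colon X\times S\to S$ is projective, hence proper, and as $k$ is algebraically closed every closed point $s$ has residue field $k$ with fibre $X\times s\cong X$. It therefore suffices to bound $\dim_{k}H^{i}(X\times s,\mathcal{E}|_{X\times s})$ uniformly over all closed points $s\in S$; note each such dimension is finite by the properness of $X$.

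First I would address the one genuine difficulty, namely that $\mathcal{E}$ need not be flat over $S$, so that the semicontinuity theorem does not apply directly. To get around this I would invoke Grothendieck's generic flatness theorem together with noetherian induction on $S$: these produce a finite partition of $S$ into locally closed subschemes $S_{1},\dots,S_{r}$ such that the restriction $\mathcal{E}|_{X\times S_{j}}$ is flat over $S_{j}$ for every $j$. For a closed point $s\in S_{j}$ the fibre of this flat family coincides with $\mathcal{E}|_{X\times s}$, so it is enough to produce a uniform bound on each stratum separately and then take the maximum over the finitely many strata.

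Fix a stratum $S_{j}$. Applying the semicontinuity theorem to the proper, flat family $\mathcal{E}|_{X\times S_{j}}$ over $S_{j}$, the function $s\mapsto \dim_{\kappa(s)}H^{i}((X\times S_{j})_{s},(\mathcal{E}|_{X\times S_{j}})_{s})$ is upper semicontinuous on $S_{j}$. It remains to observe that an integer valued upper semicontinuous function on a noetherian topological space is bounded above: the loci $Z_{n}=\{s:h^{i}(s)\geq n\}$ are closed and decrease with $n$, so by the descending chain condition they stabilize; were the common stabilized locus nonempty it would contain a point at which $h^{i}$ exceeds every integer, contradicting finiteness, whence $Z_{N}=\varnothing$ for some $N$ and $h^{i}<N$ throughout $S_{j}$.

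The hard part is genuinely the flatness reduction in the second paragraph; once the family has been stratified into flat pieces the conclusion is the routine semicontinuity argument, and combining the finitely many stratumwise bounds gives the uniform bound $\sup_{\iota\in I}h^{i}(X,\mathcal{E}_{\iota})<\infty$ asserted in Lemma \ref{uni_bound}.
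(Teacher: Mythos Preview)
Your proof is correct and follows essentially the same approach as the paper's own proof: both stratify $S$ via generic flatness and noetherian induction into pieces over which $\mathcal{E}$ is flat, and then invoke upper semicontinuity of $h^{i}$ together with a noetherian argument to get a bound on each stratum. Your write-up is simply more explicit about the final step (why an upper semicontinuous integer function on a noetherian space is bounded), which the paper leaves as ``a similar noetherian induction argument.''
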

\begin{proof}
Let $\mathcal{E}$ be a coherent sheaf on $X\times S$ as promised by definition. \par
By generic flatness we find an ascending sequence of closed subschemes of $S$:
\[\emptyset=S_{-1}\subset S_{0}\subset...\subset S_{n}=S\]
with $\mathcal{E}|_{S_{i}\setminus S_{i-1}}$ being flat over $S_{i}\setminus S_{i-1}$, where $S_{i}\setminus S_{i-1}$ is endowed with the reduced scheme structure. We immediately reduce to the case that $\mathcal{E}$ is flat over $S$, which we assume from now on. Now the uniform boundness of $h^{i}$ follows from the upper-semicontinuity of $h^{i}$, and a similar noetherian induction argument.
\end{proof}

The following is known as Kleiman criterion:
\begin{thm}[Théorème 1.13 \cite{kleiman1971theoremes}, Theorem 1.7.8 \cite{huybrechts2010geometry}]
\label{bound}
Let $\mathcal{E}_{\iota},\iota\in I$ be a family of coherent sheaves on $X$. The following are equivalent:
\begin{enumerate}
\item The family of sheaves is bounded. \par
\item Only finitely many  polynomials occur as the Hilbert polynomial $\chi(X,\mathcal{E}_{\iota}(t))$ for some $\mathcal{E}_{\iota}, \iota\in I$.  For each of these polynomial, say $P$, of degree $d$, there are constants $c_{0},...,c_{d}$  such that for each $\mathcal{E}_{\iota}$ with Hilbert polynomial $P$, there is an $\mathcal{E}_{\iota}$-regular sequence of hyperplane sections $H_{1},...,H_{d}$, with  $h^{0}(\mathcal{E}_{\iota}|_{\cap_{ j\leq i}H_{j}})\leq c_{i}$.
\end{enumerate}
\end{thm}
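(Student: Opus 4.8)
The plan is to prove the two implications separately, with the substance lying in $(2) \Rightarrow (1)$. For $(1) \Rightarrow (2)$ I would start from a parametrizing scheme $S$ of finite type and a coherent sheaf $\mathcal{E}$ on $X \times S$ as in the definition of boundedness. Stratifying $S$ by generic flatness exactly as in the proof of Lemma \ref{uni_bound}, I may assume $\mathcal{E}$ is flat over $S$; then the Hilbert polynomial $\chi(X, \mathcal{E}_{s}(t))$ is locally constant on $S$, so only finitely many polynomials occur across the finitely many strata. For the second assertion, on each flat stratum I would choose for the relevant fibres a flag of hyperplane sections in general position relative to $\mathcal{E}_{s}$; by upper-semicontinuity of $h^{0}$ and the finiteness of the stratification this produces uniform bounds $c_{0}, \ldots, c_{d}$ on $h^{0}(\mathcal{E}_{s}|_{\cap_{j \le i} H_{j}})$.

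For $(2) \Rightarrow (1)$ I would first reduce to a single Hilbert polynomial $P$, since a finite union of bounded families is bounded and by hypothesis only finitely many polynomials occur. The heart of the argument is a uniform Castelnuovo--Mumford regularity bound: I claim there is an integer $m$, depending only on $P$ and the constants $c_{0}, \ldots, c_{d}$, such that every $\mathcal{E}_{\iota}$ with Hilbert polynomial $P$ is $m$-regular, i.e. $H^{i}(X, \mathcal{E}_{\iota}(m-i)) = 0$ for all $i > 0$. To prove this I would induct on the dimension $d$ of the support using the regular-sequence restriction exact sequences
\[0 \to \mathcal{E}_{\iota}|_{\cap_{j < i} H_{j}}(-1) \to \mathcal{E}_{\iota}|_{\cap_{j < i} H_{j}} \to \mathcal{E}_{\iota}|_{\cap_{j \le i} H_{j}} \to 0,\]
the bound $h^{0} \le c_{i}$ on each restriction providing the control needed to propagate vanishing upward through the long exact cohomology sequences, with the base case handled by the zero-dimensional restriction at the bottom of the flag together with the fixed Hilbert polynomial.

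Once uniform $m$-regularity is in hand, Mumford's lemma gives that $\mathcal{E}_{\iota}(m)$ is globally generated and that $h^{0}(X, \mathcal{E}_{\iota}(m)) = P(m)$ is a fixed number $N$. Hence every $\mathcal{E}_{\iota}$ is a quotient of $\mathcal{O}_{X}(-m)^{\oplus N}$ with fixed Hilbert polynomial $P$, so it corresponds to a closed point of the Quot scheme $\mathrm{Quot}(\mathcal{O}_{X}(-m)^{\oplus N}, P)$, which is projective, in particular of finite type, over $k$. Taking $S$ to be this Quot scheme and $\mathcal{E}$ the pullback of the universal quotient exhibits the family as bounded.

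The main obstacle is the uniform regularity bound. The subtlety is that the hypotheses control only $h^{0}$ along a flag, whereas regularity is a statement about all higher cohomology; the induction must convert the $h^{0}$-data at the bottom of the flag, together with the fixed Hilbert polynomial, into simultaneous vanishing of $H^{i}(\mathcal{E}_{\iota}(m-i))$ for a single $m$ valid across the entire family. Making the dependence of $m$ on $P$ and the $c_{i}$ explicit and genuinely independent of $\iota$ is the delicate point, and is exactly where Kleiman's quantitative refinement of Mumford's regularity argument is required.
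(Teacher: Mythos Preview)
The paper does not prove this theorem; it is simply quoted as the Kleiman criterion with references to Kleiman's original paper and Huybrechts--Lehn, and is used as a black box in the proof of Proposition~\ref{case_normal}. Your sketch is the standard argument from those references: for $(2)\Rightarrow(1)$, convert the $h^{0}$-bounds along a hyperplane flag into a uniform Castelnuovo--Mumford regularity bound by induction on dimension, then parametrize the family by a single Quot scheme. That outline is correct and is exactly what one finds in the cited sources, so there is nothing further to compare.
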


\section{\normalsize Boundedness of sheaves and cohomology}
\subsection{\normalsize The projective case}
The goal is to give a simple proof of the following:
\begin{prop}\label{case_normal}
Let $X$ be a normal and integral scheme which is projective over $k$. For any $F$-divided sheaf $(\mathcal{E}_{n})_{n\geq 0}$, the family of sheaves $\mathcal{E}_{n}$ parametrized by $n\geq 0$ is bounded. 
\end{prop}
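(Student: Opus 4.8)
The plan is to deduce boundedness from the Kleiman criterion (Theorem \ref{bound}). Fix a very ample $\mathcal{L}$, write $d=\dim X$ and let $r$ be the common rank of the $\mathcal{E}_n$ (the rank is constant since $F^{*}$ preserves it). Discarding the trivial case $\mathcal{E}_n=0$, each $\mathcal{E}_n$ is locally free and supported on all of $X$, so its Hilbert polynomial has degree $d$. It then suffices to check the two halves of condition (2) of Theorem \ref{bound}: that only finitely many Hilbert polynomials $\chi(X,\mathcal{E}_n(t))$ occur, and that there are constants $c_0,\dots,c_d$ together with an $\mathcal{E}_n$-regular flag of hyperplane sections along which the successive $h^0$ are bounded by the $c_i$, uniformly in $n$.

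For the finiteness of Hilbert polynomials, the decisive input is that the absolute Frobenius scales the Chern character. Since $F^{*}L\cong L^{\otimes p}$ for any line bundle $L$, the splitting principle gives $c_i(F^{*}\mathcal{E})=p^{i}c_i(\mathcal{E})$, and hence $ch_j(F^{*}\mathcal{E})=p^{j}ch_j(\mathcal{E})$ as operators on $A_{*}(X)_{\mathbb{Q}}$, where $ch_j$ is the codimension-$j$ component sending $A_m$ to $A_{m-j}$. Iterating the isomorphisms $F^{*}\mathcal{E}_{n+1}\cong\mathcal{E}_n$ yields $ch_j(\mathcal{E}_n)=p^{-nj}\,ch_j(\mathcal{E}_0)$. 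Feeding this into Riemann--Roch (Theorem \ref{RR}) and setting $\Xi(t)=e^{t\,c_1(\mathcal{L})}\cap td(X)$ with graded pieces $\Xi(t)_m\in A_m(X)_{\mathbb{Q}}$, one gets
\[\chi(X,\mathcal{E}_n(t))=\int_X ch(\mathcal{E}_n)\cap\Xi(t)=\sum_{j=0}^{d}p^{-nj}\,P_j(t),\qquad P_j(t):=\int_X ch_j(\mathcal{E}_0)\cap\Xi(t)_j.\]
As $n\to\infty$ the right-hand side converges coefficientwise to $P_0(t)$. However each $\chi(X,\mathcal{E}_n(t))$ is an integer-valued polynomial of degree $\le d$, and such polynomials form a discrete lattice in $\mathbb{Q}[t]_{\le d}$ (with $\mathbb{Z}$-basis $\binom{t}{i}$); a convergent sequence in a discrete set is eventually constant, so the $\chi(X,\mathcal{E}_n(t))$ take only finitely many values.

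For the $h^0$ bounds I would build a flag $X=Y_0\supset Y_1\supset\cdots\supset Y_d$ with $Y_i=Y_{i-1}\cap H_i$ for general $H_i\in|\mathcal{L}|$. By the Bertini theorems on normality and connectedness, each $Y_i$ with $\dim Y_i\ge 1$ is again integral and normal, while $Y_d$ is a finite reduced scheme of length $\deg_{\mathcal{L}}X$. Because the absolute Frobenius commutes with restriction along the closed immersions $Y_i\hookrightarrow X$, the family $(\mathcal{E}_n|_{Y_i})_{n\ge 0}$ is again an $F$-divided sheaf on $Y_i$. For $i\le d-1$, Lemma \ref{decr} applied on $Y_i$ gives $h^0(Y_i,\mathcal{E}_n|_{Y_i})\le h^0(Y_i,\mathcal{E}_0|_{Y_i})=:c_i$ uniformly in $n$, while $h^0(Y_d,\mathcal{E}_n|_{Y_d})=r\cdot\deg_{\mathcal{L}}X=:c_d$ is constant. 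Since each $\mathcal{E}_n$ is locally free and each $Y_i$ is reduced with no component contained in the general $H_{i+1}$, the sequence $H_1,\dots,H_d$ is simultaneously $\mathcal{E}_n$-regular for every $n$. The constants $c_i$ depend only on the flag and not on $n$, so Theorem \ref{bound} applies and the family $(\mathcal{E}_n)_{n\ge0}$ is bounded.

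The \emph{main obstacle} is the finiteness of Hilbert polynomials: one must see that the higher Chern-character terms contribute the damping factors $p^{-nj}$, which together with the integrality of Euler characteristics force the polynomials to stabilize. This hinges on the Frobenius scaling $ch_j(F^{*}\mathcal{E})=p^{j}ch_j(\mathcal{E})$ and on keeping track of degrees under $\cap$. By comparison the $h^0$ control is soft once Lemma \ref{decr} is available, the only delicate point there being the standard use of Bertini to preserve integrality and normality of the linear sections.
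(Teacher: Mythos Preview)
Your proof is correct and follows the same overall architecture as the paper's: verify the two clauses of Kleiman's criterion (Theorem~\ref{bound}) by controlling Hilbert polynomials via Riemann--Roch and Frobenius, and bounding $h^{0}$ along a Bertini flag of normal integral sections using Lemma~\ref{decr}. Part~(ii) matches the paper verbatim.

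The one genuine difference is how you handle the Hilbert polynomials. The paper first proves (Lemma~\ref{f_div_tri}) that every $\mathcal{E}_{n}$ is numerically trivial, using the projection formula for the finite morphism $F$ together with a bounded-denominator argument to force $\int_{X}c_{i}(\mathcal{E}_{n})\cap\alpha=0$ for all $i>0$; Riemann--Roch (Lemma~\ref{hil_poly}) then gives $\chi(X,\mathcal{E}_{n}(t))=r\cdot\chi(X,\mathcal{O}_{X}(t))$ for \emph{every} $n$. You instead invoke the splitting-principle identity $c_{i}(F^{*}\mathcal{E})=p^{i}c_{i}(\mathcal{E})$ as operators, feed the resulting decay $ch_{j}(\mathcal{E}_{n})=p^{-nj}ch_{j}(\mathcal{E}_{0})$ into Riemann--Roch, and observe that the Hilbert polynomials converge in the discrete lattice of numerical polynomials of degree $\le d$, hence are eventually constant. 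Both routes are valid; the paper's buys the sharper conclusion that all Hilbert polynomials coincide from $n=0$ and records the numerical triviality of the $\mathcal{E}_{n}$ as a fact of independent interest, while yours is more streamlined for the immediate goal but only yields ``finitely many''---which is of course all that Kleiman requires.
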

Note Proposition \ref{case_normal} is by no means new.  A stronger version appears as Theorem 2.1 \cite{esnault2016simply} with an alternative proof given as Corollary 6.9 \cite{langer2025intersection}.  Both proofs are elaborated. Our proof is based on  the arguments in 
Lemma 2.2 \cite{brenner2008deep}, Lemma 2.1, Corollary 2.2 \cite{esnault2010simply}. The only modification is that, since we work with possibly singular schemes, the usual Hirzebruch-Riemann-Roch theorem should be replaced by the singular Riemann-Roch theorem.
\begin{lem}
\label{f_div_tri}
Let $(\mathcal{E}_{n})_{n\geq 0}$ be a $F$-divided sheaf on $X$. All vector bundles $\mathcal{E}_{n}$ are numerically trivial in the sense of Definition \ref{num_tri}
\end{lem}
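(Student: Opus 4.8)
The plan is to exploit the defining isomorphisms $F^{*}\mathcal{E}_{n+1}\cong\mathcal{E}_{n}$ together with the functoriality of Chern classes, so as to exhibit the relevant intersection numbers as infinitely $p$-divisible integers, whence they must vanish. First I would reduce the problem. Since $\int_{X}$ detects only the degree-$0$ part of $A_{*}(X)_{\mathbb{Q}}$, and capping with $c_{i}(\mathcal{E}_{n})$ lowers dimension by $i$, the class $c_{i}(\mathcal{E}_{n})\cap[V]$ lands in $A_{0}(X)$ precisely when $\dim V=i$. As $A_{*}(X)_{\mathbb{Q}}$ is generated by classes of integral subvarieties, it therefore suffices to prove that
\[\deg\big(c_{i}(\mathcal{E}_{n})\cap[V]\big)=0\]
for every $i>0$ and every integral closed subvariety $V\subset X$ of dimension exactly $i$. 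Write $a_{n}=\deg\big(c_{i}(\mathcal{E}_{n})\cap[V]\big)$; because Chern classes are defined integrally (Chapter 3 of \cite{fulton2012intersection}, before tensoring with $\mathbb{Q}$), each $a_{n}$ is an \emph{integer}, a fact that will be decisive at the end.

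Next I would set up a recursion in $n$. Using $\mathcal{E}_{n}\cong F^{*}\mathcal{E}_{n+1}$ and the projection formula for the proper (indeed finite) absolute Frobenius $F$,
\[F_{*}\big(c_{i}(F^{*}\mathcal{E}_{n+1})\cap[V]\big)=c_{i}(\mathcal{E}_{n+1})\cap F_{*}[V].\]
Two elementary properties of the absolute Frobenius feed into this identity. On the one hand $F$ is a homeomorphism inducing the $p$-power map on residue fields, which over the perfect field $k$ preserves degrees of $0$-cycles; hence $\int_{X}F_{*}=\int_{X}$ on $A_{0}(X)$. On the other hand the restriction of $F$ to $V$ is the absolute Frobenius of $V$, finite of degree $p^{\dim V}$, so $F_{*}[V]=p^{i}[V]$. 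Taking degrees in the displayed identity then yields the recursion $a_{n}=p^{i}\,a_{n+1}$ for all $n\geq 0$.

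Iterating gives $a_{0}=p^{iN}a_{N}$ for every $N$, with all $a_{N}\in\mathbb{Z}$. Since $i>0$ the factor $p^{iN}$ is unbounded, so a nonzero $a_{0}$ would force $|a_{N}|<1$ for large $N$ while $a_{N}$ stays a nonzero integer, a contradiction. Hence $a_{0}=0$, and the same argument gives $a_{n}=0$ for all $n$. Running this over every $i>0$ and every $V$ of dimension $i$ shows that $\int_{X}c_{i}(\mathcal{E}_{n})\cap$ vanishes identically, i.e. each $\mathcal{E}_{n}$ is numerically trivial in the sense of Definition \ref{num_tri}.

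The main obstacle I anticipate is the bookkeeping around the \emph{absolute} Frobenius, which is not a $k$-morphism: one must carefully distinguish its action on $0$-cycles (degree-preserving, since closed points have residue field $k$) from its pushforward on higher-dimensional cycles (multiplication by $p^{\dim V}$ via the generic inseparable degree), and confirm that Fulton's projection formula applies verbatim to the finite morphism $F$. Granting these points, the integrality of Chern numbers supplies the final contradiction, and the normality and integrality hypotheses on $X$ enter only through the ambient intersection-theoretic formalism and the Riemann–Roch machinery of Theorem \ref{RR}.
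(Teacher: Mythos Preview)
Your proposal is correct and follows essentially the same approach as the paper: both arguments use the projection formula for the finite morphism $F$ together with $F_{*}[V]=p^{\dim V}[V]$ to obtain the recursion $\int_{X}c_{i}(\mathcal{E}_{n})\cap\alpha=p^{i}\int_{X}c_{i}(\mathcal{E}_{n+1})\cap\alpha$, and then conclude by infinite $p$-divisibility. The only cosmetic difference is that you reduce at the outset to integral subvarieties so as to work with integers, whereas the paper allows an arbitrary rational $i$-cycle $\alpha$ and observes that the numbers lie in $\frac{1}{d}\mathbb{Z}$ for a fixed $d$; these are equivalent bookkeeping choices.
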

\begin{proof}
We have to prove 
\[\int_{X}c_{i}(\mathcal{E}_{n})\cap\alpha =0\] for an arbitrary $i$-cycle class $\alpha\in A_{i}(X)_{\mathbb{Q}}$. 
Note $\int_{X}c_{i}(\mathcal{E}_{n})\cap\alpha$ have bounded denominators for $n\geq 0$. Indeed we choose a large $d$, such that $d\cdot\alpha$ is represented by a cycle with integral coefficients. Then $\int_{X}c_{i}(\mathcal{E}_{n})\cap \alpha \in \frac{1}{d}\mathbb{Z}$ for all $n\geq 0$. \par 
Since $F$ is a finite morphism, by the projection formula (Theorem 3.2 \cite{fulton2012intersection}) we have 
\[F_{*}(c_{i}(F^{*}\mathcal{E}_{n+1})\cap\alpha)=c_{i}(\mathcal{E}_{n+1})\cap F_{*}\alpha.\]
Noting $F_{*}\alpha=p^{i}\alpha$ we have 
\[\int_{X}c_{i}(\mathcal{E}_{n})\cap\alpha=p^{i}\int_{X}c_{i}(\mathcal{E}_{n+1})\cap\alpha.\]
Now $\int_{X}c_{i}(\mathcal{E}_{n})\cap\alpha =0$ is necessarily $0$ for otherwise it is  arbitrarily $p$-divisible due to the formula \[\int_{X}c_{i}(\mathcal{E}_{n})\cap\alpha=p^{im}\int_{X}c_{i}(\mathcal{E}_{n+m})\cap\alpha\] for all $m\geq 0$.
\end{proof}
\begin{lem}
\label{hil_poly}
Let $(\mathcal{E}_{n})_{n\geq 0}$ be a $F$-divided sheaf on $X$. Then we have 
\[\chi(X,\mathcal{E}_{n}(t))=\mathrm{rk}\,\mathcal{E}_{n}\cdot \chi(X,\mathcal{O}_{X}(t)).\]
\end{lem}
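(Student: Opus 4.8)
The plan is to compute both sides through the singular Riemann--Roch theorem (Theorem \ref{RR}) and to exploit the numerical triviality of $\mathcal{E}_n$ established in Lemma \ref{f_div_tri}. First I would apply Theorem \ref{RR} to the bundle $\mathcal{E}_n(t)=\mathcal{E}_n\otimes\mathcal{L}^t$, obtaining
\[\chi(X,\mathcal{E}_n(t))=\int_X ch(\mathcal{E}_n(t))\cap td(X).\]
Since the Chern character is multiplicative (compatible with tensor products), the operator $ch(\mathcal{E}_n(t))$ factors as the composition of $ch(\mathcal{E}_n)$ and $ch(\mathcal{L}^t)$, and I would decompose $ch(\mathcal{E}_n)=\mathrm{rk}\,\mathcal{E}_n+ch_{>0}(\mathcal{E}_n)$ into its degree-zero (rank) part and its strictly positive-degree part.

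The crux is to show that the positive-degree part contributes nothing after integration, namely
\[\int_X ch_{>0}(\mathcal{E}_n)\cdot ch(\mathcal{L}^t)\cap td(X)=0.\]
Each homogeneous component $ch_i(\mathcal{E}_n)$ with $i>0$ is a universal polynomial in $c_1(\mathcal{E}_n),\dots,c_i(\mathcal{E}_n)$, and every monomial occurring has positive total weight, hence contains at least one factor $c_j(\mathcal{E}_n)$ with $j>0$. The decisive point is that Definition \ref{num_tri} asserts the vanishing of the \emph{entire} homomorphism $\int_X c_j(\mathcal{E}_n)\cap(\cdot)$ on all of $A_*(X)_{\mathbb{Q}}$, not merely on cycles of one fixed dimension. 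Given such a monomial, I would first cap all the remaining factors---the other Chern classes of $\mathcal{E}_n$, the classes from $ch(\mathcal{L}^t)$, and $td(X)$---against the fundamental class to produce some element $\beta\in A_*(X)_{\mathbb{Q}}$, using that Chern-class operators commute; then numerical triviality applied to the single isolated factor gives $\int_X c_j(\mathcal{E}_n)\cap\beta=0$. Summing over monomials yields the displayed vanishing.

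It then remains to identify the surviving degree-zero term with the right-hand side. Because $ch(\mathcal{O}_X)=1$, a second application of Theorem \ref{RR} gives $\chi(X,\mathcal{O}_X(t))=\int_X ch(\mathcal{L}^t)\cap td(X)$, so the rank part contributes
\[\int_X \mathrm{rk}\,\mathcal{E}_n\cdot ch(\mathcal{L}^t)\cap td(X)=\mathrm{rk}\,\mathcal{E}_n\cdot\chi(X,\mathcal{O}_X(t)),\]
which is exactly the claimed identity. I expect the only delicate step to be the bookkeeping in the middle paragraph: one must be careful that the Chern-class operators genuinely commute, so that an arbitrary positive-degree Chern class of $\mathcal{E}_n$ can always be isolated and applied last. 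This is precisely where the strength of Lemma \ref{f_div_tri}---vanishing on the whole Chow group rather than in a single degree---is indispensable, since the Chern character in degrees $\geq 2$ involves products such as $c_1(\mathcal{E}_n)^2$ that are not directly controlled by the single-class statement.
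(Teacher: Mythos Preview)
Your proposal is correct and follows essentially the same route as the paper: apply the singular Riemann--Roch theorem, expand $ch(\mathcal{E}_n)$ as rank plus higher-degree terms, and invoke Lemma~\ref{f_div_tri} to see that only the rank survives. Your write-up is simply more explicit than the paper's about why monomials such as $c_1(\mathcal{E}_n)^2$ also vanish after integration (one minor terminological slip: in Fulton's singular setting $td(X)=\tau(\mathcal{O}_X)$ is already the class in $A_*(X)_{\mathbb{Q}}$ being capped against, so no separate ``fundamental class'' is needed).
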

\begin{proof}
This is a consequence of Theorem \ref{RR}. We have 
\[\chi(X,\mathcal{E}_{n}(t))=\int_{X}ch(\mathcal{E}_{n})\cap (1+c_{1}(\mathcal{L}^{t}))\cap td(X).\] 
Unfolding the definiton $ch(\mathcal{E})=\mathrm{rk}\,\mathcal{E}_{n}+c_{1}+\frac{1}{2}(c_{1}^{2}-c_{2})+...$ by Lemma \ref{f_div_tri}. only the leading term contributes.
\end{proof}
\begin{proof}[Proof of Proposition \ref{case_normal}]
By lemma \ref{hil_poly} all $\mathcal{E}_{n}$ have the same Hilbert polynomial. It suffices to look for constants $c_{0},...,c_{d}$ as in Theorem \ref{bound}, and in the current setting $d$ is the $\mathrm{dim}X$ as well. By Bertini we choose a sequence of hyperplane sections $H_{1},...,H_{d}$, with $\cap_{j\leq i}H_{j}$ still being normal and integral for each $i$. The $F$-divided sheaf $(\mathcal{E}_{n})_{n\geq 0}$ restricts to a $F$-divided sheaf on $\cap_{j\leq i}H_{j}$ for each $i$, and the uniform boundedness of $h^{0}$ is promised by Lemma \ref{decr}.
\end{proof}
\begin{cor}\label{cohbound_proj}
Let $X$ be an integral normal scheme  which is projective over an algebraically closed  field $k$ of characteristic $p$.  Let $(\mathcal{E}_{n})_{n\geq 0}$ be a $F$-divided sheaf on $X$ and $\mathcal{F}$ be a coherent sheaf. Then we have
\[\sup_{n}h^{i}(X,\mathcal{F}\otimes\mathcal{E}_{n})<\infty\]
for any $i$. 
\end{cor}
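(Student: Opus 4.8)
The plan is to deduce this corollary formally from the three facts already assembled in the excerpt, so that essentially no new argument is required. First I would invoke Proposition \ref{case_normal}: since $X$ is normal, integral, and projective over $k$, the family of vector bundles $\mathcal{E}_{n}$, parametrized by $n\geq 0$, is bounded in the sense of the definition preceding Lemma \ref{uni_bound}. This is where all the substance lies — the numerical triviality of the $\mathcal{E}_{n}$ (Lemma \ref{f_div_tri}), the resulting common Hilbert polynomial (Lemma \ref{hil_poly}), and the uniform control of $h^{0}$ on nested hyperplane sections via Lemma \ref{decr}, all feeding into the Kleiman criterion of Theorem \ref{bound}.

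Next I would pass from the family $\{\mathcal{E}_{n}\}$ to the twisted family $\{\mathcal{F}\otimes\mathcal{E}_{n}\}$. Here I would cite the observation made immediately after the definition of boundedness, namely that tensoring a bounded family by a fixed coherent sheaf $\mathcal{F}$ produces a new bounded family: if $\mathcal{E}$ on $X\times S$ realizes the family $\{\mathcal{E}_{n}\}$, then $\mathcal{E}\otimes p_{X}^{*}\mathcal{F}$ on $X\times S$ realizes $\{\mathcal{F}\otimes\mathcal{E}_{n}\}$, where $p_{X}\colon X\times S\to X$ is the projection. Thus $\{\mathcal{F}\otimes\mathcal{E}_{n}\}_{n\geq 0}$ is bounded.

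Finally I would apply Lemma \ref{uni_bound} to the bounded family $\{\mathcal{F}\otimes\mathcal{E}_{n}\}_{n\geq 0}$: for each fixed cohomological degree $i$, the dimension $h^{i}(X,\mathcal{F}\otimes\mathcal{E}_{n})$ is uniformly bounded over all $n$, which is exactly the asserted supremum being finite. There is no genuine obstacle at the level of the corollary itself; the only thing worth flagging is bookkeeping, namely making sure the hypotheses of Proposition \ref{case_normal} (normal, integral, projective) are literally the standing hypotheses here, which they are, so that its conclusion applies verbatim before tensoring by $\mathcal{F}$.
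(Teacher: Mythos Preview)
Your proposal is correct and matches the paper's own proof, which simply reads ``Combine Lemma \ref{uni_bound} and Proposition \ref{case_normal}.'' You have just spelled out the implicit step of passing from the bounded family $\{\mathcal{E}_{n}\}$ to the bounded family $\{\mathcal{F}\otimes\mathcal{E}_{n}\}$ via the remark after the definition of boundedness.
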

\begin{proof}
Combine Lemma \ref{uni_bound} and Proposition \ref{case_normal}.
\end{proof}

\subsection{\normalsize Cohomological boundedness in general}
We will use the following naive bound for a spectral sequence. It will be later applied to Leray spectral sequences of maps.
\begin{lem}
\label{stupid}
Consider a spectral sequence at the second page in the first quadrant
\[E_{2}^{p,q}\Rightarrow H^{p+q},\]
whose terms are all finite dimensional spaces over $k$, and it is bounded in the sense that $E_{2}^{s,t}\neq 0$ only if $s\leq M$ and $t\leq N$. We have the inequalities
\[\mathrm{dim}H^{n}\leq \mathrm{dim}E_{2}^{n,0}+\sum_{0\leq i\leq n-1}\mathrm{dim}E_{2}^{i,(n-i)}\]
and \[ \mathrm{dim} E_{2}^{n,0}\leq \mathrm{dim}H^{n}+\sum_{2\leq i\leq N}\mathrm{dim}E_{2}^{n-i,i-1}.\]
\end{lem}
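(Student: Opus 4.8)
The plan is to run the standard bookkeeping for a bounded first-quadrant spectral sequence, using only two general facts together with the boundedness hypothesis. The first fact is that each page is a subquotient of the previous one, so that $\mathrm{dim}\,E_{r+1}^{p,q}\leq\mathrm{dim}\,E_{r}^{p,q}$ and in particular $\mathrm{dim}\,E_{\infty}^{p,q}\leq\mathrm{dim}\,E_{2}^{p,q}$. The second is that the abutment $H^{n}$ carries a finite filtration whose associated graded pieces are exactly the groups $E_{\infty}^{p,q}$ with $p+q=n$, so that $\mathrm{dim}\,H^{n}=\sum_{p+q=n}\mathrm{dim}\,E_{\infty}^{p,q}$. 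The boundedness assumption guarantees both that each fixed position stabilizes after finitely many pages, so that $E_{\infty}^{p,q}$ is well defined, and that every sum appearing below is finite.

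First I would establish the upper bound on $\mathrm{dim}\,H^{n}$, which needs no information about the differentials. Writing the antidiagonal sum as $\mathrm{dim}\,H^{n}=\sum_{i=0}^{n}\mathrm{dim}\,E_{\infty}^{i,\,n-i}$ and applying $\mathrm{dim}\,E_{\infty}^{i,\,n-i}\leq\mathrm{dim}\,E_{2}^{i,\,n-i}$ term by term gives $\mathrm{dim}\,H^{n}\leq\sum_{i=0}^{n}\mathrm{dim}\,E_{2}^{i,\,n-i}$. Separating off the $i=n$ summand $E_{2}^{n,0}$ from the remaining terms with $0\leq i\leq n-1$ yields precisely the first displayed inequality.

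The second inequality is where the differentials enter, and it rests on the observation that the bottom row $q=0$ only receives differentials and never emits them. For $r\geq 2$ the outgoing map $d_{r}\colon E_{r}^{n,0}\to E_{r}^{n+r,\,1-r}$ has target $0$ since $1-r<0$, while the incoming map is $d_{r}\colon E_{r}^{n-r,\,r-1}\to E_{r}^{n,0}$. Hence $E_{r+1}^{n,0}$ is the cokernel of this incoming map, and passing from page $r$ to page $r+1$ in the bottom row drops the dimension by exactly $\mathrm{dim}\,\mathrm{im}(d_{r})$, so that $\mathrm{dim}\,E_{r}^{n,0}-\mathrm{dim}\,E_{r+1}^{n,0}\leq\mathrm{dim}\,E_{r}^{n-r,\,r-1}\leq\mathrm{dim}\,E_{2}^{n-r,\,r-1}$. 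Summing this telescoping inequality over $r\geq 2$ bounds $\mathrm{dim}\,E_{2}^{n,0}-\mathrm{dim}\,E_{\infty}^{n,0}$ by $\sum_{r\geq 2}\mathrm{dim}\,E_{2}^{n-r,\,r-1}$, in which only the terms with $r-1\leq N$ survive by boundedness in the $t$-direction. Since $E_{\infty}^{n,0}$ is a graded piece of the filtration on $H^{n}$, hence a subquotient, we have $\mathrm{dim}\,E_{\infty}^{n,0}\leq\mathrm{dim}\,H^{n}$, and combining the two estimates gives the second displayed inequality.

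The argument presents no genuine obstacle, being entirely formal; the one point demanding care is the indexing of the telescoping sum. One must verify that the bottom-row filtration step at page $r$ is controlled precisely by $E_{2}^{n-r,\,r-1}$ and that the vanishing for $t>N$ truncates the summation to a finite range, so that the right-hand side of the second inequality is exactly the finite sum claimed. To avoid off-by-one errors in the summation bounds, I would fix the conventions for the differential bidegrees, the direction of the abutment filtration, and the identification of $E_{\infty}^{n,0}$ with the appropriate filtration step at the very start.
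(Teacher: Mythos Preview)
Your argument is correct and follows essentially the same route as the paper: the first inequality is the trivial bound coming from $\dim E_{\infty}^{p,q}\leq\dim E_{2}^{p,q}$, and the second is obtained by telescoping the dimension drops $\dim E_{r}^{n,0}-\dim E_{r+1}^{n,0}\leq\dim E_{2}^{n-r,\,r-1}$ along the bottom row together with $\dim E_{\infty}^{n,0}\leq\dim H^{n}$. Your write-up is slightly more explicit about the cokernel description and the boundedness cutoff, but the substance is identical.
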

\begin{proof}
The first inequality is trivial. For the second note
\[\mathrm{dim}E_{i+1}^{n,0}\geq \mathrm{dim}E_{i}^{n,0} -\mathrm{dim}E_{i}^{n-i,i-1}\geq \mathrm{dim}E_{i}^{n,0} -\mathrm{dim}E_{2}^{n-i,i-1}.\] The spectral sequence necessarily degenerates at $(N+1)$-th page and we have
\[\mathrm{dim}H^{n}\geq \mathrm{dim}E_{N+1}^{n,0}\geq \mathrm{dim}E_{N}^{n,0}-\mathrm{dim}E_{2}^{n-N,N-1} \geq ...\geq \mathrm{dim}E_{2}^{n,0}-\sum_{2\leq i\leq N}\mathrm{dim}E_{2}^{n-i,i-1}.\]
\end{proof}
\begin{thm}
\label{coherent}
Let $X$ be a scheme  which is proper over an algebraically closed  field $k$ of characteristic $p$.  Let $(\mathcal{E}_{n})_{n\geq 0}$ be a $F$-divided sheaf on $X$ and $\mathcal{F}$ be a coherent sheaf on $X$. Then we have
\[\sup_{n}h^{i}(X,\mathcal{F}\otimes\mathcal{E}_{n})<\infty\]
for any $i$.
\end{thm}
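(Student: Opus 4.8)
The plan is to reduce the general proper case to the projective, normal, integral case already settled in Corollary \ref{cohbound_proj}, by combining Chow's lemma with a dévissage and an induction on $\dim X$. Throughout, the essential point is that each $\mathcal{E}_{n}$ is a vector bundle, so tensoring by $\mathcal{E}_{n}$ is exact and commutes with the projection formula; this is what lets us separate the index $n$ from the fixed geometric data.

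First I would phrase the statement to be proved by induction: for every proper $k$-scheme $Y$ of dimension $\leq d$, every $F$-divided sheaf $(\mathcal{E}_{n})$ on $Y$, and every coherent $\mathcal{F}$ on $Y$, one has $\sup_{n}h^{i}(Y,\mathcal{F}\otimes\mathcal{E}_{n})<\infty$ for all $i$. The case $d=0$ is immediate, since then $h^{0}$ is a length, equal to $\operatorname{rk}\mathcal{E}_{0}$ times $\operatorname{length}\mathcal{F}$, and the rank is constant along the $F$-divided structure. For the inductive step I would first dévisse in $\mathcal{F}$: tensoring a short exact sequence of coherent sheaves by the locally free $\mathcal{E}_{n}$ stays exact, so the property ``$\sup_{n}h^{i}(Y,\mathcal{F}\otimes\mathcal{E}_{n})<\infty$ for all $i$'' is stable under two-out-of-three in short exact sequences. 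Hence it suffices to treat $\mathcal{F}=i_{*}\mathcal{O}_{W}$ for $i\colon W\hookrightarrow Y$ an integral closed subscheme; by the projection formula and the fact that $i$ is a closed immersion this reduces to proving $\sup_{n}h^{i}(W,\mathcal{E}_{n}|_{W})<\infty$. If $\dim W<d$ this is the induction hypothesis, so I am left with the case $Y=W$ integral of dimension $d$ and $\mathcal{F}=\mathcal{O}_{Y}$.

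For this remaining case I would invoke Chow's lemma to obtain a birational projective morphism from an integral projective scheme, then normalize to get a proper birational morphism $\rho\colon \tilde{X}\to X$ with $\tilde{X}$ normal, integral, and projective. Since the absolute Frobenius is natural, $\rho$ commutes with $F$, so $(\rho^{*}\mathcal{E}_{n})$ is again an $F$-divided sheaf, now on $\tilde{X}$; Corollary \ref{cohbound_proj} therefore gives $\sup_{n}h^{j}(\tilde{X},\rho^{*}\mathcal{E}_{n})<\infty$ for all $j$. I would then run two comparisons. On the one hand, the adjunction $\mathcal{O}_{X}\hookrightarrow \rho_{*}\mathcal{O}_{\tilde{X}}$ (injective as $X$ is integral and $\rho$ birational) has cokernel $\mathcal{C}$ supported where $\rho$ fails to be an isomorphism, a closed set of dimension $<d$; tensoring by $\mathcal{E}_{n}$ and taking the long exact sequence bounds $h^{i}(X,\mathcal{E}_{n})$ by $h^{i}(X,\rho_{*}(\rho^{*}\mathcal{E}_{n}))+h^{i-1}(X,\mathcal{E}_{n}\otimes\mathcal{C})$, and the second term is controlled by the induction hypothesis applied to the support of $\mathcal{C}$. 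On the other hand, the Leray spectral sequence $E_{2}^{p,q}=H^{p}(X,R^{q}\rho_{*}(\rho^{*}\mathcal{E}_{n}))\Rightarrow H^{p+q}(\tilde{X},\rho^{*}\mathcal{E}_{n})$ together with Lemma \ref{stupid} bounds $h^{i}(X,\rho_{*}(\rho^{*}\mathcal{E}_{n}))=\dim E_{2}^{i,0}$ by $h^{i}(\tilde{X},\rho^{*}\mathcal{E}_{n})$ plus correction terms $\dim E_{2}^{i-j,j-1}$ with $j\geq 2$.

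The crux is the uniform control of these correction terms, and here the projection formula is decisive: because $\mathcal{E}_{n}$ is locally free, $R^{q}\rho_{*}(\rho^{*}\mathcal{E}_{n})\cong \mathcal{E}_{n}\otimes R^{q}\rho_{*}\mathcal{O}_{\tilde{X}}$, so the base sheaf $\mathcal{R}^{q}:=R^{q}\rho_{*}\mathcal{O}_{\tilde{X}}$ is independent of $n$. For $q\geq 1$ the sheaf $\mathcal{R}^{q}$ vanishes over the dense open where $\rho$ is an isomorphism, hence is supported in dimension $<d$, and the correction terms $h^{i-j}(X,\mathcal{E}_{n}\otimes\mathcal{R}^{j-1})$ fall under the induction hypothesis. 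The vertical bound $N$ in Lemma \ref{stupid} is the fixed maximal fiber dimension, so only finitely many such terms appear, each uniformly bounded in $n$. Combining the two comparisons yields $\sup_{n}h^{i}(X,\mathcal{E}_{n})<\infty$, closing the induction. I expect the main obstacle to be the bookkeeping that makes every term other than the projective input land on a strictly lower-dimensional closed subscheme; this, rather than any single estimate, is where the argument must be set up with care, and it is precisely what forces the auxiliary coherent sheaf $\mathcal{F}$ into the inductive statement.
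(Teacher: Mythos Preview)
Your proposal is correct and follows essentially the same strategy as the paper: induction on $\dim X$, reduction to the projective normal integral case via Chow's lemma, and control of the discrepancy through the Leray spectral sequence combined with Lemma~\ref{stupid}, the projection formula, and the induction hypothesis on lower-dimensional supports. The only organizational difference is that you d\'evisse on $\mathcal{F}$ first (reducing to $\mathcal{F}=i_{*}\mathcal{O}_{W}$ on an integral closed subscheme and thereby combining the reduction-to-integral and normalization steps with Chow's lemma into a single birational modification $\rho$), whereas the paper keeps $\mathcal{F}$ arbitrary and instead simplifies the scheme in stages (first $X^{\mathrm{red}}$, then the normalization $X^{\nu}$, then Chow); both routes feed the same correction terms $h^{p}(X,\mathcal{E}_{n}\otimes R^{q}\rho_{*}\mathcal{O}_{\tilde X})$ and $h^{i-1}(X,\mathcal{E}_{n}\otimes\mathcal{C})$ back into the induction. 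One small point to tighten: the standard d\'evissage filtration has graded pieces of the form $i_{*}\mathcal{I}$ for a nonzero ideal sheaf $\mathcal{I}\subset\mathcal{O}_{W}$ rather than $i_{*}\mathcal{O}_{W}$ itself, but the short exact sequence $0\to\mathcal{I}\to\mathcal{O}_{W}\to\mathcal{O}_{W}/\mathcal{I}\to 0$ (with the quotient supported in strictly smaller dimension) immediately reduces this to the case you treat.
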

\begin{proof}
In the proof, we use \[\lfloor{X},(\mathcal{E}_{n})_{n\geq 0},\mathcal{F}\rfloor\]
to denote the property \[\sup_{i\geq 0,n\geq 0}h^{i}(X,\mathcal{F}\otimes\mathcal{E}_{n})<\infty \]
as in the statement of Theorem \ref{coherent}.
\begin{step}
\label{projective}
We have $\lfloor X,(\mathcal{E}_{n})_{n\geq 0},\mathcal{F}\rfloor$ whenver $X$ is projective, normal and integral. This is Corollary \ref{cohbound_proj}. 
\end{step}
We apply induction on $d=\mathrm{dim}X$. The case of $d=0$ is  trivial. 

\begin{step}
\label{reduced}
It suffices to prove $\lfloor X ,(\mathcal{E}_{n})_{n\geq 0},\mathcal{F}\rfloor$ for reduced schemes $X$. To prove this, let  $\mathcal{I}$ be the sheaf of ideals defining the reduced scheme structure $r: X^{red}\hookrightarrow X$.  We have a finite ascending sequence \[0=\mathcal{I}^{m}\mathcal{F}\subset \mathcal{I}^{m-1}\mathcal{F}\subset...\subset \mathcal{I}^{0}\mathcal{F}=\mathcal{F},\] and it is clear that it suffices to check the property $\lfloor X ,(\mathcal{E}_{n})_{n\geq 0},\mathcal{I}^{l}\mathcal{F}/\mathcal{I}^{l+1}\mathcal{F}\rfloor$ for $0\leq l\leq m-1$, which is equivalent to the property  $\lfloor X^{red} ,(r^{*}\mathcal{E}_{n})_{n\geq 0},r^{*}(\mathcal{I}^{l}\mathcal{F}/\mathcal{I}^{l+1}\mathcal{F})\rfloor$ for $0\leq l\leq m-1$.
\end{step}
\begin{step}
\label{integral}
It suffices to check $\lfloor X ,(\mathcal{E}_{n})_{n\geq 0},\mathcal{F}\rfloor$  for normal integral schemes $X$. By Step \ref{reduced} we assume $X$ is reduced and consider the normalization map $\nu: X^{\nu}\rightarrow X$.  The morphism $\nu$ is finite, since the normalization maps are finite for Nagata schemes (\cite[\href{https://stacks.math.columbia.edu/tag/035S}{Tag 035S}]{stacks-project}
) and schemes of finite type over a field are Nagata (\cite[\href{https://stacks.math.columbia.edu/tag/035B}{Tag 035B}]{stacks-project}
).  By \cite[\href{https://stacks.math.columbia.edu/tag/0BXC}{Tag 0BXC}]{stacks-project} we have an open subset $U$ of $X$ containing all generic points of $X$ over which $\nu$ is an isomorphism.  We have $\lfloor X ,(\mathcal{E}_{n})_{n\geq 0},\mathcal{F}\rfloor$  to be equivalent to $\lfloor X ,(\mathcal{E}_{n})_{n\geq 0},\nu_{*}\nu^{*}\mathcal{F}\rfloor$  since the canonical morphism $\mathcal{F}\rightarrow \nu_{*}\nu^{*}\mathcal{F}$ restricts to an isomorphism on $U$ and we apply induction to its kernel and cokernel, which have supports of dimension $<d$. The property  $\lfloor X ,(\mathcal{E}_{n})_{n\geq 0},\nu_{*}\nu^{*}\mathcal{F}\rfloor$ is exactly  $\lfloor X^{\nu} ,(\nu^{*}\mathcal{E}_{n})_{n\geq 0},\nu^{*}\mathcal{F}\rfloor$ since $\nu$ is finite. 
\end{step}
\begin{step}
We assume $X$ is integral and normal. By Chow's lemma there exists $f:X'\rightarrow X$ with $X'$ being integral, normal and projective over $k$. There exists an open subset $U$ of $X$ over which $f$ is an isomorphism.  Consider the Leray spectral sequence for $f$ and the sheaf $f^{*}\mathcal{F}\otimes f^{*}\mathcal{E}_{n}$:  
\[E_{2}^{pq}=H^{p}(X,R^{q}f_{*}f^{*}\mathcal{F}\otimes\mathcal{E}_{n}) \Rightarrow H^{p+q}(X',f^{*}\mathcal{F}\otimes f^{*}\mathcal{E}_{n}).\] 
We have $\lfloor X ,(\mathcal{E}_{n})_{n\geq 0},\mathcal{F}\rfloor$  to be equivalent to $\lfloor X ,(\mathcal{E}_{n})_{n\geq 0},f_{*}f^{*}\mathcal{F}\rfloor$  since the canonical morphism $\mathcal{F}\rightarrow f_{*}f^{*}\mathcal{F}$ restricts to an isomorphism on $U$ and we apply induction to its kernel and cokernel, which have supports of dimension $<d$. Note we have $R^{q}f_{*}=0$ over $U$ for $q>0$. Applying induction to  $\lfloor X ,(\mathcal{E}_{n})_{n\geq 0},R^{q}f_{*}f^{*}\mathcal{F}\rfloor$ for $q>0$ and by Lemma \ref{stupid} we see  that $\lfloor X ,(\mathcal{E}_{n})_{n\geq 0},f_{*}f^{*}\mathcal{F}\rfloor$ is equivalent to $\lfloor X' ,(f^{*}\mathcal{E}_{n})_{n\geq 0},f^{*}\mathcal{F}\rfloor$. Finally $\lfloor X',(\mathcal{E}_{n})_{n\geq 0},f^{*}\mathcal{F}\rfloor$ holds by 
 Step \ref{projective}. 
\end{step}
\end{proof}

\begin{cor}
\label{coherent_cor}
Let $X$ be a smooth proper scheme over $k$. For any $F$-divided sheaf $(\mathcal{E}_{n})_{n\geq 0}$ we have 
\[\sup_{n}h^{i}(X,\mathcal{E}_{n})<\infty\]
for any $i$.
\end{cor}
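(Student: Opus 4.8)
The plan is to deduce this statement as an immediate specialization of Theorem \ref{coherent}. First I observe that a smooth proper scheme over $k$ is, in particular, proper over $k$, so the hypotheses of Theorem \ref{coherent} are satisfied for our $X$ and for the given $F$-divided sheaf $(\mathcal{E}_{n})_{n\geq 0}$. It remains only to choose the auxiliary coherent sheaf $\mathcal{F}$ appropriately. As the introduction already anticipates, the presence of $\mathcal{F}$ in Theorem \ref{coherent} is purely a technical device needed to run the dévissage, and for the present corollary the correct choice is the most naive one, namely $\mathcal{F}=\mathcal{O}_{X}$.

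Concretely, taking $\mathcal{F}=\mathcal{O}_{X}$, which is a coherent sheaf on $X$, we have the canonical isomorphism $\mathcal{O}_{X}\otimes\mathcal{E}_{n}\cong\mathcal{E}_{n}$ for every $n$. Therefore
\[\sup_{n}h^{i}(X,\mathcal{E}_{n})=\sup_{n}h^{i}(X,\mathcal{O}_{X}\otimes\mathcal{E}_{n}),\]
and the right-hand side is finite for each $i$ by Theorem \ref{coherent}. This proves the claim.

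I expect no genuine obstacle at this stage: all of the substantive work has already been carried out in Theorem \ref{coherent}, whose proof reduces the general proper case to the projective normal integral case (via passage to the reduction, normalization, and Chow's lemma together with the spectral-sequence bound of Lemma \ref{stupid}), and then settles that case through Corollary \ref{cohbound_proj}, i.e. through the numerical triviality of $F$-divided sheaves (Lemma \ref{f_div_tri}), the resulting control of Hilbert polynomials (Lemma \ref{hil_poly}), and Kleiman's boundedness criterion (Theorem \ref{bound}). It is worth remarking that the smoothness hypothesis on $X$ is not actually used in the argument beyond guaranteeing properness; it is retained in the statement only because this is the setting relevant to the $\mathcal{D}_{X}$-module application, where smoothness is needed for the equivalence of Theorem \ref{df}.
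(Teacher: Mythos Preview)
Your proposal is correct and matches the paper's approach: the corollary is stated there without proof, being an immediate specialization of Theorem \ref{coherent} with $\mathcal{F}=\mathcal{O}_{X}$. Your additional remark that smoothness is not actually used here is also accurate.
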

\section{\normalsize The finiteness of $\mathcal{D}$-module cohomology}
Still let $X$ be a scheme over $k$ and let $(\mathcal{E}_{n})_{n\geq0}$ be a $F$-divided sheaf. 
We consider the $n$-fold Frobenius twist  $ H^{i}(X,\mathcal{E}_{n})\otimes_{k,F^{n}}k$, with $\alpha x\otimes\beta=x\otimes \alpha^{p^{n}}\beta$ and  the $k$-linear structure  given by the right tensor factor.
The natural morphism bwtween cohomology
\[H^{i}(X,\mathcal{E}_{n+1})\rightarrow H^{i}(X, F^{*}\mathcal{E}_{n+1})\cong H^{i}(X,\mathcal{E}_{n}),\]
can be $k$-linearized as a morphism
\[H^{i}(X,\mathcal{E}_{n+1})\otimes_{k,F^{n+1}}k \rightarrow H^{i}(X,\mathcal{E}_{n})\otimes_{k,F^{n}}k,\]
and we obtain an inverse system $\{H^{i}(X,\mathcal{E}_{n})\otimes_{k,F^{n}}k\}_{n\geq 0}$. 
\begin{lem}[Theorem 2.4 \cite{ASENS_1975_4_8_3_295_0}]
\label{exactsequence}
Let $X$ be smooth over $k$. Let $\mathcal{E}$ be an $\mathcal{O}_{X}$-coherent $\mathcal{D}_{X}$-module and  $(\mathcal{E}_{n})_{n\geq0}$ be the corresponding $F$-divided sheaf on $X$ with $\mathcal{E}_{0}=\mathcal{E}$ (Theorem \ref{df}). For any $i$, we have an exact sequence
\[0\rightarrow R^{1}\varprojlim_{n} H^{i-1}(X,\mathcal{E}_{n})\otimes_{k,F^{n}}k \rightarrow H_{\mathcal{D}_{X}}^{i}(X,\mathcal{E}) \rightarrow \varprojlim_{n}H^{i}(X,\mathcal{E}_{n})\otimes_{k,F^{n}}k\rightarrow 0. \]
\end{lem}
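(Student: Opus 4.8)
The plan is to realize $R\Gamma_{\mathcal{D}_X}(X,\mathcal{E})$ as a derived inverse limit of the coherent cohomologies $R\Gamma(X,\mathcal{E}_n)$, and then to extract the stated sequence from the standard $R\varprojlim$ exact sequence of a tower indexed by $\mathbb{N}$. To organize the limit I would first introduce the level filtration: let $\mathcal{D}_X^{(n)}\subseteq\mathcal{D}_X$ be the subsheaf of subalgebras generated by the differential operators of order $<p^{n}$, so that $\mathcal{D}_X=\varinjlim_n\mathcal{D}_X^{(n)}$ since every operator has finite order. A $\mathcal{D}_X$-linear map out of $\mathcal{O}_X$ is the same datum as a compatible family of $\mathcal{D}_X^{(n)}$-linear maps, so the global horizontal-section functor factors as
\[\Gamma_{\mathcal{D}_X}(\mathcal{E})=\varprojlim_n\mathrm{Hom}_{\mathcal{D}_X^{(n)}}(\mathcal{O}_X,\mathcal{E}),\]
an identity that holds termwise and hence persists on any injective resolution of $\mathcal{E}$.

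Next I would compute each level by Frobenius descent. The sheaf $\mathcal{H}om_{\mathcal{D}_X^{(n)}}(\mathcal{O}_X,\mathcal{E})$ is exactly the subsheaf $\mathcal{E}_n$ of Theorem \ref{df}, and the descent equivalence underlying that theorem identifies $\mathcal{E}\mapsto\mathcal{E}_n$ with an \emph{exact} functor, namely the quasi-inverse of $(F^{n})^{*}$. In particular $\mathcal{E}xt^{>0}_{\mathcal{D}_X^{(n)}}(\mathcal{O}_X,-)$ vanishes, and therefore
\[R\,\mathrm{Hom}_{\mathcal{D}_X^{(n)}}(\mathcal{O}_X,\mathcal{E})\cong R\Gamma(X,\mathcal{E}_n),\]
whose $i$-th cohomology is $H^{i}(X,\mathcal{E}_n)$. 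The change of $k$-linear structure along the $n$-fold Frobenius is precisely the twist $\otimes_{k,F^{n}}k$, and the transition maps of the resulting tower are the Frobenius-twisted restriction maps $H^{i}(X,\mathcal{E}_{n+1})\to H^{i}(X,F^{*}\mathcal{E}_{n+1})\cong H^{i}(X,\mathcal{E}_n)$ described just before the lemma.

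The main obstacle is to promote these two observations to a single derived-category identity
\[R\Gamma_{\mathcal{D}_X}(X,\mathcal{E})\cong R\varprojlim_n\bigl(R\Gamma(X,\mathcal{E}_n)\otimes_{k,F^{n}}k\bigr).\]
The difficulty is that one injective resolution $\mathcal{E}\to\mathcal{I}^{\bullet}$ over $\mathcal{D}_X$ must serve every level $n$ at once: one must know that each $\mathcal{I}^{j}$ is acyclic for all the functors $\Gamma_{\mathcal{D}_X^{(n)}}$ and that $\varprojlim_n\mathrm{Hom}_{\mathcal{D}_X^{(n)}}(\mathcal{O}_X,\mathcal{I}^{\bullet})$ computes the \emph{derived} limit and not merely the naive one. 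I would handle this either by setting up the Grothendieck spectral sequence for the composite of $\varprojlim_n$ with the functor $\mathcal{E}\mapsto(\Gamma_{\mathcal{D}_X^{(n)}}\mathcal{E})_n$ valued in $\mathbb{N}$-indexed towers, checking that $\mathcal{D}_X$-injectives are sent to $\varprojlim$-acyclic towers of level-wise acyclic objects, or by replacing $\mathcal{I}^{\bullet}$ with a Cartan--Eilenberg double complex adapted to the whole filtration, so that the $n$-th row computes $R\Gamma(X,\mathcal{E}_n)$ and the tower is surjective level-wise in each degree. In either route the required vanishing is furnished by the descent exactness of the previous step together with the flasqueness of injective sheaves.

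Finally, because $\mathbb{N}$ is a countable directed set, the inverse-limit functor on towers of $k$-vector spaces has cohomological dimension at most one, so $R^{p}\varprojlim_n=0$ for $p\ge2$. The Grothendieck spectral sequence, equivalently the Milnor sequence of the tower, therefore degenerates to the three-term exact sequence
\[0\to R^{1}\varprojlim_n H^{i-1}(X,\mathcal{E}_n)\otimes_{k,F^{n}}k\to H^{i}_{\mathcal{D}_X}(X,\mathcal{E})\to\varprojlim_n H^{i}(X,\mathcal{E}_n)\otimes_{k,F^{n}}k\to0,\]
which is exactly the assertion of the lemma.
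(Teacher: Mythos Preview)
The paper does not prove this lemma; it is quoted as Theorem~2.4 of Ogus's 1975 paper, and the remark immediately following it points to a further reference in the Hochschild-cohomology setting. Your outline is a faithful reconstruction of the standard argument, and of Ogus's own modulo his infinitesimal-site language: factor $\mathrm{Hom}_{\mathcal{D}_X}(\mathcal{O}_X,-)$ through the tower of level-$n$ horizontal-section functors, use Frobenius descent to identify the $n$-th level with $R\Gamma(X,\mathcal{E}_n)$ up to the $F^{n}$-twist of $k$-structure, and then extract the Milnor short exact sequence from $R\varprojlim$ over $\mathbb{N}$.

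The technical point you flag is the genuine one, and your two suggested routes are both viable. The cleanest way to dispatch it is to note that $\mathcal{D}_X$ is locally free as a right $\mathcal{D}_X^{(n)}$-module (\'etale-locally $\mathcal{D}_X^{(n)}$ is the matrix algebra $\mathcal{E}nd_{\mathcal{O}_X^{p^{n}}}(\mathcal{O}_X)$ and the successive inclusions are free), so extension of scalars $\mathcal{D}_X\otimes_{\mathcal{D}_X^{(n)}}(-)$ is exact and hence $\mathcal{D}_X$-injectives restrict to $\mathcal{D}_X^{(n)}$-injectives. Each term of the tower built from a $\mathcal{D}_X$-injective resolution is therefore already acyclic for $\mathrm{Hom}_{\mathcal{D}_X^{(n)}}(\mathcal{O}_X,-)$, and the Grothendieck spectral sequence collapses exactly as you need.
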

\begin{rmk}
See also \cite{mundinger2024hochschild} for Lemma \ref{exactsequence} in the context of Hochschild cohomology. 
\end{rmk}
We are ready to prove the main theorem. 
\begin{thm}
\label{d_mod}
Let $X$ be a smooth proper scheme over an algebraically closed field of characteristic $p$. For any $\mathcal{O}_{X}$-coherent $\mathcal{D}_{X}$-module $\mathcal{E}$ and any cohomology degree $i$, the $\mathcal{D}_{X}$-module cohomology $H^{i}_{\mathcal{D}_{X}}(X,\mathcal{E})$ is finite dimensional as a $k$-vector space. 
\end{thm}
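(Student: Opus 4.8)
The plan is to combine the exact sequence of Lemma \ref{exactsequence} with the uniform cohomological bound of Corollary \ref{coherent_cor}, thereby reducing the finiteness of $\mathcal{D}_X$-module cohomology to an elementary fact about inverse limits of uniformly bounded finite-dimensional vector spaces. First I would pass from the $\mathcal{O}_X$-coherent $\mathcal{D}_X$-module $\mathcal{E}$ to its associated $F$-divided sheaf $(\mathcal{E}_n)_{n\geq 0}$ with $\mathcal{E}_0=\mathcal{E}$ via Theorem \ref{df}, and invoke Lemma \ref{exactsequence} to obtain, for each $i$, the short exact sequence
\[0\to R^1\varprojlim_n H^{i-1}(X,\mathcal{E}_n)\otimes_{k,F^n}k \to H^i_{\mathcal{D}_X}(X,\mathcal{E})\to \varprojlim_n H^i(X,\mathcal{E}_n)\otimes_{k,F^n}k\to 0.\]
It therefore suffices to show that both outer terms are finite dimensional.

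The key observation is that each $H^i(X,\mathcal{E}_n)$ is finite dimensional, since $X$ is proper and $\mathcal{E}_n$ is coherent, and that the Frobenius twist $(-)\otimes_{k,F^n}k$ is a base change along an automorphism of the perfect field $k$, hence preserves dimension. Crucially, Corollary \ref{coherent_cor} gives $\sup_n h^i(X,\mathcal{E}_n)<\infty$ for every $i$. Consequently, for each fixed $i$, the inverse system $\{H^i(X,\mathcal{E}_n)\otimes_{k,F^n}k\}_{n\geq 0}$ consists of finite dimensional $k$-vector spaces whose dimensions are uniformly bounded in $n$.

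It then remains to carry out a purely formal argument about such systems. For any inverse system of finite dimensional vector spaces, the decreasing chains of images $\mathrm{im}(V_{n+m}\to V_n)$ stabilize in $m$, so the Mittag-Leffler condition holds and $R^1\varprojlim$ vanishes; this kills the left-hand term of the exact sequence. For the right-hand term, replacing each $V_n$ by its stable image produces an inverse system with surjective transition maps and the same limit; the dimensions of these stable images are nondecreasing in $n$ and bounded, hence eventually constant, so the transition maps are eventually isomorphisms and the limit is finite dimensional. The exact sequence then forces $H^i_{\mathcal{D}_X}(X,\mathcal{E})$ to be finite dimensional.

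I would emphasize that the only nontrivial geometric input is Corollary \ref{coherent_cor}, which itself rests on Theorem \ref{coherent}; the remainder of the argument is entirely formal. The one point demanding care is the $k$-linear structure on the transition maps: one must verify that the Frobenius linearization indeed makes $\{H^i(X,\mathcal{E}_n)\otimes_{k,F^n}k\}_n$ a genuine inverse system in $\mathrm{Vect}_k$, as already recorded in the discussion preceding Lemma \ref{exactsequence}. Since the dimension count is insensitive to the twist, no real obstacle remains once the uniform bound from Corollary \ref{coherent_cor} is available.
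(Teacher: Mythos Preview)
Your proposal is correct and follows essentially the same route as the paper: invoke Lemma~\ref{exactsequence}, kill the $R^1\varprojlim$ term via Mittag--Leffler (using only finite-dimensionality of each $H^{i-1}(X,\mathcal{E}_n)$), and bound the inverse limit using Corollary~\ref{coherent_cor} together with the stable-image argument, which the paper records separately as Lemma~\ref{inverselimit}. Your sketch of that lemma matches the paper's proof almost verbatim.
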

\begin{proof}
Still write $(\mathcal{E}_{n})_{n\geq 0}$ the $F$-divided sheaf corresponding to $\mathcal{E}$. 
 Since $H^{i-1}(X,\mathcal{E}_{n})$, $n\geq 0$, are all of finite dimension, the inverse system \[H^{i-1}(X,\mathcal{E}_{n})\otimes_{k,F^{n}}k,n\geq 0\] verifies the Mittag-Leffler condition (Definition 3.5.6  \cite{weibel1994introduction}) and the first derived limit $R^{1}\varprojlim_{n} H^{i-1}(X,\mathcal{E}_{n})\otimes_{k,F^{n}}k =0$ (Proposition 3.5.7 \cite{weibel1994introduction}) .  \par
The inverse limit $\varprojlim_{n}H^{i}(X,\mathcal{E}_{n})\otimes_{k,F^{n}}k$ is finite dimensional due to Corollary \ref{coherent_cor} and Lemma \ref{inverselimit} below. 
\end{proof}
\begin{lem} \label{inverselimit}
Suppose we have an inverse system of finite dimensional vector spaces over $k$ indexed by the category $\mathbb{N}$. Namely, we have a finite dimensional vector space $V_{i}$ for each $i\in \mathbb{N}$ and an arrow $f_{ij}:V_{i}\rightarrow V_{j}$ whenver $i>j$. Suppose we have 
\[\sup_{i}\,\mathrm{dim}_{k}V_{i}<\infty,\]
then the dimension of $\varprojlim_{i}\,V_{i}$ is bounded by $\sup_{i} \mathrm{dim}_{k}V_{i}$.
\end{lem}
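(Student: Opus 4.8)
The plan is to analyse the canonical projections out of the inverse limit and show that one of them is already injective with image of dimension at most $N := \sup_i \dim_k V_i$. Write $L = \varprojlim_i V_i$ and let $\pi_i \colon L \to V_i$ denote the projections, which satisfy $\pi_j = f_{ij} \circ \pi_i$ for $i > j$. The entire argument is then a piece of Mittag-Leffler bookkeeping with these relations.

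First I would track the images $\pi_i(L) \subseteq V_i$. From $\pi_j = f_{ij}\circ\pi_i$ one gets $\pi_j(L) = f_{ij}(\pi_i(L))$, hence $\dim_k \pi_j(L) \leq \dim_k \pi_i(L)$ whenever $i > j$. Thus $(\dim_k \pi_i(L))_i$ is non-decreasing in $i$ and bounded above by $N$, so it is eventually constant: there are an index $i_0$ and an integer $r \leq N$ with $\dim_k \pi_i(L) = r$ for all $i \geq i_0$.

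Next I would examine the kernels $\ker \pi_i \subseteq L$. Again by compatibility these form a descending chain, $\ker\pi_i \subseteq \ker\pi_j$ for $i > j$, and their total intersection is zero, since an element of $L$ annihilated by every projection is the zero sequence. For $i \geq i_0$ the quotient $L/\ker\pi_i \cong \pi_i(L)$ has dimension $r$, so each $\ker\pi_i$ has finite codimension exactly $r$ in $L$. A descending chain of subspaces all of the same finite codimension must be constant, so $\ker\pi_i = \ker\pi_{i_0}$ for all $i \geq i_0$; combining this with the vanishing of the intersection gives $\ker\pi_{i_0} = \bigcap_{i\geq i_0}\ker\pi_i = 0$. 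Hence $\pi_{i_0}$ is injective and $\dim_k L = \dim_k \pi_{i_0}(L) = r \leq N$, which is the desired bound.

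The step I would regard as the crux is the passage from \emph{constant finite codimension along a descending chain} together with \emph{intersection zero} to the conclusion that a single kernel $\ker\pi_{i_0}$ already vanishes; this is exactly what forces the inverse limit to be detected faithfully by one term $V_{i_0}$, and it is the only place where finite dimensionality is genuinely used. One could alternatively phrase everything through the stable images $W_j = \bigcap_{i>j} f_{ij}(V_i)$, whose transition maps are surjective and eventually isomorphisms, yielding $\varprojlim_i V_i \cong W_{i_0}$; I find the kernel formulation slightly more direct, but either route reaches the bound $\dim_k \varprojlim_i V_i \leq N$.
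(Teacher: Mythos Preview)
Your proof is correct. The argument via the kernels $\ker\pi_i$ is clean: the key step---that a descending chain of subspaces of fixed finite codimension $r$ in $L$ must stabilize---holds even without knowing in advance that $L$ is finite dimensional, because the surjection $L/\ker\pi_i \twoheadrightarrow L/\ker\pi_{i_0}$ between $r$-dimensional spaces is forced to be an isomorphism.

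Your route differs from the paper's in orientation. The paper works inside the $V_i$: it passes to the stable subspaces $V_i^s = \bigcap_{j>i}\mathrm{Im}\,f_{ji}$, observes that the induced transition maps are surjective with bounded dimension and hence eventually isomorphisms, and concludes $\varprojlim_i V_i = \varprojlim_i V_i^s \cong V_{i_0}^s$ for large $i_0$. You instead work from the limit $L$ outward, tracking the images and kernels of the projections $\pi_i$. The two are dual viewpoints---indeed you mention the stable-image formulation at the end as an alternative, and that is exactly the paper's argument. What your approach buys is that you never need to check $\varprojlim_i V_i^s = \varprojlim_i V_i$ separately; what the paper's approach buys is that it produces the isomorphism $L \cong V_{i_0}^s$ without ever reasoning about subspaces of the (a priori possibly infinite-dimensional) limit $L$.
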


\begin{proof}
For each $i$ we use $V_{i}^{s}$ to denote the intersection \[V_{i}^{s}=\cap_{j>i}\mathrm{Im}\,f_{ji},\] called the stable subspace of $V_{i}$.  Then obviously the subspaces $V_{i}^{s}$ form an inverse system indexed by $\mathbb{N}$ as well, with transition maps  all surjective, and 
\[\varprojlim_{i}V^{s}_{i}=\varprojlim_{i}V_{i}.\]
The transition map $V_{i+1}^{s}\rightarrow V_{i}^{s}$ is an isomorphism for any sufficiently large $i$, otherwise due to the surjectivity of these transition maps the dimension of $V_{i}^{s}$ will be unbounded, contradicting to the assumption. The dimension of the inverse limit is also bounded by the dimension of $V_{i}^{s}$ for sufficiently large $i$, hence bounded by $\sup_{i}\,\mathrm{dim}_{k}V_{i}$.
\end{proof}
\begin{rmk}
The $0$-th $\mathcal{D}_{X}$-module cohomology  has a relative analogy named as the Gauss-Manin stratification in \cite{AIF_2013__63_6_2267_0} where the author proves a finiteness result  even without  assuming the family to be proper (see lemma 1.5 \cite{AIF_2013__63_6_2267_0}).\par
\end{rmk}
\begin{rmk}
Theorem \ref{d_mod} is somewhat the best we can achieve. To generalize it naively in the following two natural directions both produces something pathological. Namely
\begin{enumerate}
\item What  if $X$ is smooth but not necessarily proper over $k$? In this case the $\mathcal{D}_{X}$-module cohomology can be infinite dimensional. For example, take $X=\mathbb{A}_{k}^{1}$, $i=1$ and $\mathcal{M}=\mathcal{O}_{X}$. Apply Lemma \ref{exactsequence}.
\item Suppose we have a smooth proper morphism $X\rightarrow S$. In  \cite{AIF_2013__63_6_2267_0} the author defines the $0$-th Gauss-Manin stratification as a functor from $\mathcal{O}_{X}$-coherent $\mathcal{D}_{X}$-modules to  $\mathcal{O}_{S}$-coherent $\mathcal{D}_{S}$-modules. What about higher (or derived) Gauss-Manin stratifications? This is probably problematic in general, meaning the higher Gauss-Manin stratification may not be coherent as $\mathcal{O}_{S}$-module if we start from a $\mathcal{O}_{X}$-coherent $\mathcal{D}_{X}$-module. It suffices to look at Example 3.7 \cite{ASENS_1975_4_8_3_295_0}.
\end{enumerate}
\end{rmk}
\begin{rmk}
We refer to \cite{bao2025cohomology} for some computations and a $K(\pi,1)$-problem for the  $\mathcal{D}$-module cohomology on curves. 
\end{rmk}

 \bibliographystyle{plain}
 \bibliography{D-module}

\end{document}